\documentclass[12pt]{article}

\usepackage[utf8]{inputenc}
\usepackage{verbatim}
\usepackage{float}
\usepackage{authblk}
\usepackage{url}
\usepackage{color}
\usepackage{booktabs}
\usepackage[long,c3]{optidef}
\usepackage{fullpage}
\usepackage{amsmath}
\usepackage{amssymb}
\usepackage{amsthm}
\usepackage{cite,hyperref,cleveref}
\usepackage{tikz}
\usetikzlibrary{arrows.meta}
\usepackage[framemethod=tikz]{mdframed}
\usepackage{complexity}
\usepackage{enumerate}
\usepackage[ruled,commentsnumbered,linesnumbered,vlined]{algorithm2e}
\usetikzlibrary{arrows.meta,positioning}
\usepackage{caption}

\newtheorem{lema}{Lemma}

\newtheorem{teo}{Theorem}
\newtheorem{prop}{Proposition}
\newtheorem{cor}{Corollary}
\newtheorem{problem}{Problem}
\theoremstyle{definition}

\DeclareMathOperator{\h}{h}
\DeclareMathOperator{\n}{n}
\DeclareMathOperator{\diam}{diam}
\DeclareMathOperator{\dist}{dist}

\DeclareMathOperator{\ohp}{\ohp\overrightarrow{\h^+}}

\DeclareMathOperator{\vc}{vc}

\usepackage{tabularx}
\usepackage{nameref}
\usepackage[skins,listings]{tcolorbox}

\newtcolorbox{ProblemBox}[2][]{
	colframe=black!20!white,
	coltitle=black,
	arc=1.5mm,
	boxrule=.15mm,
	colbacktitle=white,
	colback=white,
	enhanced,
	adjusted title=flush left,
	attach boxed title to top left={yshift=-3mm,xshift=3mm},
	boxed title style={colframe=white,righttitle=-3mm,lefttitle=-3mm},
	title=#2,#1
}

\begin{document}
\title{Harmonious Colorings: bounds, heuristics and integer-linear formulations\thanks{This work was supported by CNPq 312417/2022-5, 404479/2023-5, 442977/2023-9, 308939/2025-5, 444528/2024-5, 421005/2025-4  and by Projet ANR GODASse, Projet-ANR-24-CE48-4377.}}

\author[1]{J. Araújo}

\author[1]{M. Campêlo}

\affil[1]{\small Universidade Federal do Ceará, Fortaleza, Brazil}

\author[2]{B. Martins\thanks{Corresponding author}}

\affil[2]{\small ENS de Lyon, CNRS, Université Claude Bernard Lyon 1,\newline LIP UMR 5668, 69342 Lyon Cedex 07, France.}

\author[3]{M. C. Santos}

\affil[3]{\small Universidade Federal de Minas Gerais, Belo Horizonte, Brazil}
\affil[1]{\footnotesize{{\texttt{\href{mailto:julio@mat.ufc.br}{julio@mat.ufc.br}, \href{mailto:mcampelo@ufc.br}{mcampelo@ufc.br}, \href{mailto:beatriz.martins@ens-lyon.fr}{beatriz.martins@ens-lyon.fr} and \href{mailto:marciocs@dcc.ufmg.br}{marciocs@dcc.ufmg.br}}}}}

\maketitle

\begin{abstract}
    A proper coloring $c$ of a simple graph $G$ is \emph{harmonious} if, for every pair of distinct edges $uv,xy\in E(G)$, we have that $\{c(u),c(v)\}\neq \{c(x),c(y)\}$. 
    The \emph{harmonious chromatic number} of $G$, denoted by $\h(G)$, is the least positive integer $k$ such that $G$ has a harmonious coloring with $k$ colors.
    In this work, we extend an idea presented in [Kolay, et al. Harmonious coloring: Parameterized algorithms and upper bounds. Theor. Comp. Sci. 772 (2019), 132–142] to compare the harmonious chromatic numbers of two graphs $G$ and $H$, with $H$ being obtained from $G$ by identifying vertices at distance at least three. Furthermore, by fixing a proof presented in the same work, we manage to improve one of its upper bounds.
    We also introduce and study the first, to the best of our knowledge, integer-linear programming formulations for this problem in the literature, along with some heuristics. We provide some preliminary tests on random instances and instances from the second \textit{DIMACS Implementation Challenge}.
\end{abstract}

\section{Introduction}

Given a simple graph $G = (V,E)$, a $k$-coloring of $G$ is a function $c: V(G)\to\{1,\ldots,k\}$. A $k$-coloring $c$ is \emph{proper} if $c(u)\neq c(v)$ for every $uv\in E(G)$. The \emph{chromatic number} of $G$, denoted by $\chi(G)$, is the minimum positive integer $k$ such that $G$ admits a proper $k$-coloring. It is well-known that computing $\chi(G)$ is an $\NP$-hard problem~\cite{Karp72}.

A $k$-coloring $c$ (not necessarily proper) is \emph{line-distinguishing} if, for any pair of distinct edges $uv, xy\in E(G)$, we have that $\{c(u),c(v)\}\neq \{c(x),c(y)\}$. The \emph{line-distinguishing chromatic number}, denoted by $\lambda(G)$, is the least positive integer $k$ for which $G$ admits a line-distinguishing $k$-coloring~\cite{HK1983}. 
A \emph{harmonious $k$-coloring} is a \emph{proper} line-distinguishing $k$-coloring.
 
The \emph{harmonious chromatic number} of $G$, denoted by $\h(G)$, is the minimum positive integer $k$ such that $G$ has a harmonious $k$-coloring~\cite{lee1987upper}.
It is worth mentioning that, in their seminal paper, Hopcroft and Krishnamoorthy~\cite{HK1983} used the term \emph{Harmonious Coloring} to simply mean line-distinguishing coloring; however, since the work of Lee and Mitchem~\cite{lee1987upper}, the term is dedicated to the version of the problem that requires the coloring to be proper.

The distance between two vertices is the length of a shortest path between them (it is $+\infty$ if no such a path exists), and the diameter of a graph is the maximum distance between any two vertices. A harmonious coloring must assign different colors to vertices at distance 1 (since it is proper) or 2 (since it is line-distinguishing). So, complete graphs are not the only $n$-vertex graphs having a harmonious chromatic number equal to $n$. Any graph with $n$ vertices and a diameter of at most two must receive $n$ colors in a harmonious coloring~\cite{MILLER}. 

Graph colorings have been used to model many different types of problems, from computing derivatives~\cite{Gebremedhin05derivate} to testing printed circuits~\cite{Garey76circuit} and performing image segmentation~\cite{Gomez07image}. However, harmonious graph coloring has been studied mainly from a theoretical point of view for several years\cite{importancia}. 
In the scarce literature on direct applications of harmonious coloring, we can cite its use in modeling a radio navigation system \cite{selvi15app1}.

In this work, we present some results on structural graph theory related to harmonious coloring, alongside integer programming models and computational experiments for determining the harmonious chromatic number of a given graph.

\paragraph{Our contributions and related results.}
In the next two sections, we study upper bounds for $\h(G)$. Section 2 considers $d$-degenerate graphs.

We say that a graph $G$ is \emph{d-degenerate} if, for any subgraph $H\subseteq G$, the minimum degree $\delta(H)$ of $H$ is at most $d$.
The Harmonious Coloring of $d$-degenerate graphs has been related to vertex cover~\cite{kolay2019harmonious}. Recall that $S\subseteq V(G)$ is a \emph{vertex cover} of $G$ if, for any edge $uv\in E(G)$, we have $u\in S$ or $v\in S$.
Let $\vc(G)$ denote the cardinality of a minimum vertex cover $S\subseteq V(G)$ of a graph $G$.  
In~\cite{kolay2019harmonious}, the authors claim that if $G$ is $d$-degenerate, then $\h(G)\leq \vc(G)+d(\Delta(G)-1)+\Delta(G)(d-1)$, where $\Delta(G)$ is the maximum degree of $G$. In Section~\ref{sec:2}, we argue that their proof is incorrect and show a counter-example for $d=1$. Besides, we prove that actually $\h(G)\leq \vc(G)+d(\Delta(G)-1)+1$, thus providing a tighter bound for $d\geq 2$.

Section~\ref{sec::theory} considers arbitrary graphs. Using identifications on vertices at distance at least three, we derive a sufficient condition for a graph to have a harmonious chromatic number at most $k$. We remark that part of our argument is similar to the one used in~\cite{kolay2019harmonious} to identify color classes in a harmonious coloring as part of a fixed-parameter tractable algorithm to compute $\h(G)$, parameterized by $\vc(G)$.

Computing the harmonious chromatic number of a graph is a hard task. Given a graph $G$ and a positive integer $k$, determining whether $\h(G)\leq k$ is $\NP$-complete, even if $G$ is a split graph~\cite{asdre2007harmonious}, or an interval graph~\cite{asdre2007harmonious,bodlaender1989achromatic}, or a tree of radius three, or a forest consisting of three trees of radius two~\cite{ED1995}. To address this computational hardness, Section~\ref{sec::programming} is devoted to presenting Integer Linear Programming formulations, Section~\ref{sec::algorithm} proposes two heuristics for the problem, and Section~\ref{sec:5} presents computational results for the formulations and heuristics on random instances.

We introduce three integer linear programming models, two of them based on the representatives formulation presented in~\cite{campelo:04}. 
To the best of our knowledge, this is the first work to present a mathematical programming approach for harmonious coloring. By the way, the first integer-linear programming formulations to compute the line-distinguishing chromatic number were proposed only recently~\cite{oliveira2019coloraccao}. Of course, these formulations do not impose the constraint that the coloring must be proper. 

We emphasize that we do not only add this constraint 
to the formulation presented in~\cite{oliveira2019coloraccao}. We present a new set of $\mathcal{O}(n^3)$ constraints to ensure a line-distinguishing coloring, which improves the $\mathcal{O}(n^4)$ constraints proposed in~\cite{oliveira2019coloraccao}, where $n$ stands for the number of vertices of the input graph.  
We present computational experiments to test our formulations and heuristics on random instances as well as on instances from the second \textit{DIMACS Implementation Challenge}\cite{dimacs}.

\section{Bound on the vertex cover and the maximum degree}\label{sec:2}

In~\cite{kolay2019harmonious}, the authors present the following bound:

\begin{teo}[\!\!\cite{kolay2019harmonious}]
\label{2019teo1}
    For any graph $G$ with $\Delta(G)\geq2,~\Delta(G)+1\leq\h(G)\leq \vc(G)+\Delta(G)(\Delta(G)-1)$.
\end{teo}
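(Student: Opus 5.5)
The lower bound and the upper bound will be handled separately.

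\emph{Lower bound.} Take a vertex $v$ of degree $\Delta=\Delta(G)$ together with its neighbours $u_1,\dots,u_\Delta$. The set $\{v,u_1,\dots,u_\Delta\}$ has pairwise distance at most two. In any harmonious coloring, vertices at distance one or two receive distinct colors: at distance one because the coloring is proper, and at distance two because two edges $vu_i$ and $vu_j$ with $c(u_i)=c(u_j)$ would carry the same color pair. These $\Delta+1$ vertices therefore need $\Delta+1$ distinct colors, giving $\h(G)\ge\Delta(G)+1$.

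\emph{Upper bound.} Fix a minimum vertex cover $S$ with $|S|=\vc(G)$, and give each vertex of $S$ its own private color, using $\vc(G)$ colors in total. The remaining set $I=V\setminus S$ is independent, so every edge incident to $I$ has its other endpoint in $S$. We will color $I$ with a fresh palette of $\Delta(\Delta-1)$ colors, disjoint from the colors on $S$.

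We need to check what can go wrong.
\begin{itemize}
\item Edges inside $S$ have endpoints with unique colors, so their color pairs are distinct from each other and from every edge touching $I$.
\item A conflict between an edge $xu$ and an edge $yw$ with $x,y\in I$ and $u,w\in S$ forces $u=w$, since the color of $u$ is unique, and $c(x)=c(y)$. So we need exactly that two vertices of $I$ with a common neighbour get different colors.
\item Properness is automatic, because $I$ is independent and the palettes are disjoint.
\end{itemize}
The problem therefore reduces to properly coloring the auxiliary graph $A$ on $I$ in which $x\sim y$ iff $x$ and $y$ share a neighbour in $S$.

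The main step is bounding $\chi(A)$. A vertex $x\in I$ has at most $\Delta$ neighbours in $S$, and each of these has at most $\Delta-1$ other neighbours in $I$. Hence $\deg_A(x)\le\Delta(\Delta-1)$, and greedy coloring gives $\chi(A)\le\Delta(\Delta-1)+1$. This is one more than we want, and saving this last color is the expected obstacle.

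The first attempt is Brooks' theorem: $\chi(A)\le\Delta(A)$ unless a component of $A$ is a complete graph $K_{\Delta(A)+1}$, or an odd cycle when $\Delta(A)=2$. The plan is to show that the extremal cases can be avoided, or else absorbed by reusing colors of $S$.

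One concrete trick avoids Brooks entirely. Take an $x\in I$ of degree $\Delta$. Rather than spending a new color, give $x$ the private color of some vertex $s\in S$ that is not adjacent to $x$ and shares no neighbour with $x$. Such an $s$ exists unless $G$ is small, and this is exactly the case where $\vc(G)$ is small relative to the components, which can be handled directly.

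Alternatively, recolor the $\Delta(\Delta-1)+1$-clique situation by hand: there every neighbour $u$ of $x$ in $S$ has exactly $\Delta$ neighbours in $I$, and one can check that one color class can be merged with a color of $S$ without creating a repeated pair. Doing the bookkeeping of these degenerate cases carefully, using $\Delta\ge2$, is the part of the argument that needs the most care.
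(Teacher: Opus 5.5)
Your lower bound, and your reduction of the upper bound to properly coloring the auxiliary graph $A$ on $I=V\setminus S$ with $\Delta(A)\le\Delta(\Delta-1)$, are correct. They match the strategy the paper describes for this cited result: individual colors on a minimum vertex cover, $\Delta(\Delta-1)+1$ colors on the independent side, then a trick that saves one color. But saving that color is the entire content of the bound, and your proposal does not establish it.

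\begin{itemize}
\item \emph{The ``concrete trick'' fails as stated.} Recoloring a single vertex $x$ with $c(s)$ does nothing for the rest of $A$, and the extra color can be forced in many places. If $G$ is a disjoint union of Heawood graphs and $S$ is one side of each bipartition, then $A$ is a disjoint union of copies of $K_7$, with only $6$ palette colors available.
\item \emph{Several reused colors interact.} Once vertices $x_1,x_2$ get reused colors $c(s_1),c(s_2)$, the edges $x_1s_2$ and $x_2s_1$, if both present, carry the same pair.
\item \emph{The existence of $s$ is unproved.} You assert it ``unless $G$ is small'' and leave the complementary case to be ``handled directly''.
\item \emph{Merging a whole color class into $c(s)$ is unjustified.} That class has members throughout $G$, some possibly within distance two of $s$.
\item \emph{The Brooks obstructions really occur.} $C_6$, $C_{10}$ (with $S$ a bipartition class) and the Heawood graph give $A=K_3$, $C_5$, $K_7$. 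So these cases cannot be avoided; they must be absorbed, which you never do.
\end{itemize}

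The missing idea is a component-by-component argument that pins down exactly when the extra color is needed.

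\textbf{Rooted greedy coloring.} In each component $C$ of $A$, take a root $x_C$ of minimum $A$-degree and color $C$ greedily in reverse BFS order from $x_C$. Each non-root vertex still has an uncolored neighbor (its parent) when it is colored, so the $\Delta(\Delta-1)$ palette colors suffice for it. They also suffice for $x_C$ unless $\deg_A(x_C)=\Delta(\Delta-1)$.

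\textbf{Structure of the hard case.} If $\deg_A(x_C)=\Delta(\Delta-1)$, then $C$ is $\Delta(\Delta-1)$-regular, so every $y\in C$ is tight: $\deg_G(y)=\Delta$, each $u\in N_G(y)$ has degree $\Delta$ with all its neighbors in $I$, and the vertices reached from $y$ through different $u$ are pairwise distinct. Then $U=N_G(C)\subseteq S$ satisfies $N_G(U)\subseteq C$. Hence $C\cup U$ is an entire $\Delta$-regular bipartite component of $G$, and $|U|=|C|\ge\Delta(\Delta-1)+1>\Delta$; this is precisely where $\Delta\ge2$ enters.

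\textbf{Reusing a private color.} Pick any $s\in U\setminus N_G(x_C)$. Then $\dist_G(x_C,s)\ge3$, since $N_G(x_C)\subseteq S$ while $N_G(s)\subseteq I$. Give $x_C$ the color $c(s)$. No repeated pair arises, because the only edges containing $c(s)$ are of two kinds:
\begin{itemize}
\item those at $s$, whose other ends lie in $C\setminus\{x_C\}$ and carry distinct palette colors;
\item those at $x_C$, whose other ends carry distinct private colors.
\end{itemize}
Roots needing this treatment lie in distinct components of $G$, so the reuses cannot interact, and Brooks' theorem is not needed at all.
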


And a refined version of it to $d$-degenerate graphs:

\begin{teo}[\!\!\cite{kolay2019harmonious}]
\label{2019teo2}
    If $G$ is a $d$-degenerate graph, then $\Delta(G)+1\leq\h(G)\leq \vc(G)+d(\Delta(G)-1)+\Delta(G)(d-1)$.
\end{teo}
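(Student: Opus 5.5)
We prove the lower bound directly, and we prove the upper bound by coloring a minimum vertex cover injectively and then coloring the remaining independent set greedily with fresh colors. The target for the second part is the estimate $\h(G)\le \vc(G)+d(\Delta(G)-1)+1$. This implies the stated bound whenever $d\ge 2$, because then $\Delta(G)(d-1)\ge 1$.

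\textbf{The lower bound and the reduction.} The lower bound $\Delta(G)+1\le \h(G)$ is immediate. A vertex of maximum degree and its neighbours are pairwise at distance at most two, so a harmonious coloring must give them $\Delta(G)+1$ distinct colors. For the upper bound, fix a minimum vertex cover $S$ and give its vertices the distinct colors $1,\dots,\vc(G)$. The set $I=V(G)\setminus S$ is independent, so all neighbours of a vertex of $I$ lie in $S$. Vertices of $I$ will only receive colors larger than $\vc(G)$, so properness is automatic. Suppose two distinct edges receive the same pair of colors. Colors on $S$ are distinct and colors on $I$ avoid them, so the two edges must be $vs$ and $xs$ with $v,x\in I$, $c(v)=c(x)$ and a common neighbour $s$. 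The whole problem therefore reduces to properly coloring the auxiliary graph $H$ on vertex set $I$, where $vx\in E(H)$ iff $v$ and $x$ have a common neighbour in $G$. It suffices to show that $H$ is $d(\Delta(G)-1)$-degenerate, which gives $\chi(H)\le d(\Delta(G)-1)+1$.

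\textbf{Degeneracy of $H$ via orientation.} Take a degeneracy ordering of $G$ and orient each edge towards the later endpoint, so every vertex has out-degree at most $d$. Fix a subgraph $H'$ of $H$. For each edge $vx$ of $H'$ choose a witness path $v\,s\,x$ and charge it as follows:
\begin{itemize}
\item if some endpoint, say $v$, has its edge to $s$ oriented out of $v$, charge the path to $v$;
\item otherwise $s$ precedes both $v$ and $x$, and the path is charged to $s$.
\end{itemize}
Each vertex $v$ receives at most $d(\Delta(G)-1)$ charges of the first kind: it has at most $d$ out-neighbours $s$, each with at most $\Delta(G)-1$ further neighbours. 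The aim is then to exhibit a vertex of $H'$ with $H'$-degree at most $d(\Delta(G)-1)$. For this I would take the vertex of $V(H')$ that is last in the degeneracy order. For that vertex $v$, every common neighbour $s$ with some $x\in V(H')$ either comes after $v$, or comes before $v$ with $x$ earlier than $v$.

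\textbf{Main obstacle.} I expect the hard step to be the paths where $s$ precedes both endpoints. These are exactly the configurations where the bound can break. The out-degree of $s$ only bounds the number of such pairs at $s$ by $\binom{d}{2}$, and it is not clear how to absorb them into the count at $v$. The extra additive term $\Delta(G)(d-1)$ in the claimed statement presumably pays for these paths: $v$ has at most $\Delta(G)$ in-neighbours $s$, and each has at most $d-1$ other out-neighbours. With that term included, the count at the last vertex $v$ closes as $d(\Delta(G)-1)+\Delta(G)(d-1)$.

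For $d=1$, however, the statement asserts $\h(G)\le \vc(G)+\Delta(G)-1$. At the point where I would use degeneracy I would check small trees, such as a star with subdivided edges, to see whether this can hold at all. I suspect it fails there, in which case only the corrected bound $\vc(G)+d(\Delta(G)-1)+1$ can be established in general.
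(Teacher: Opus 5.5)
Your lower bound is correct. The proposal, however, ends without settling the upper bound, and that bound cannot be proved because it is false. The paper handles this statement by refuting it with $2K_2$ and stars, and replacing it by $\h(G)\le\vc(G)+d(\Delta(G)-1)+1$.

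You already had a counterexample in hand. For $d=1$ the statement reads $\Delta(G)+1\le\h(G)\le\vc(G)+\Delta(G)-1$. Your own distance-two argument contradicts this whenever $\vc(G)=1$, i.e.\ for every star $K_{1,k}$: the claimed bound is $\h\le k$, the true value is $k+1$. The paper's other example is $2K_2$, with $\vc=2$, $\Delta=1$, claimed $\h\le 2$ and true value $3$.

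The family you proposed to check is a weak probe. A star with $k\ge 3$ subdivided edges has $\h=k+1\le 2k-1=\vc+\Delta-1$, so that family only fails for $k\le 2$ (e.g.\ $P_5$, whose four edges force $\h\ge 4>3$). The missing step is to exhibit such a graph and conclude that the statement is false, rather than leave it as a suspicion.

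For $d\ge 2$ your route has two further defects, but your own charging can be repaired.

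First, the target ``$H$ is $d(\Delta(G)-1)$-degenerate'' is out of reach for any argument that, like yours, never uses minimality of $S$. Take the six vertices of an octahedron as an independent set $I$. Add one vertex adjacent to each of the four faces in one class of the face $2$-coloring, and one vertex adjacent to each antipodal pair. This graph is $2$-degenerate (order: face vertices, then $I$, then pair vertices) with $\Delta=3$. Any two vertices of $I$ have a common neighbour, so $H=K_6$, which has degeneracy $5>4=d(\Delta-1)$. The same graph, with $S$ the seven added vertices, also breaks the stronger inductive claim in the paper's proof of the corrected bound. That claim would give at most $12$ colors with the seven added vertices individually colored, leaving five colors for six pairwise conflicting vertices. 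The failure is in the paper's Case 1, which gives a new vertex of $S$ a fresh color although two of its neighbours in $I$ may already share one.

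Second, your fallback count is taken at the wrong vertex and is one unit short. With edges pointing to later vertices, at the \emph{last} vertex $v$ of $V(H')$ the paths $x\to s\to v$ are not bounded in terms of $d$. Even the count you state yields only $\chi(H)\le d(\Delta-1)+\Delta(d-1)+1$, one more than claimed.

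Take instead the \emph{first} vertex $v$ of $V(H')$, with $a\le d$ out-neighbours. All its $H'$-neighbours $x$ come later, so a witness $s$ is one of two kinds:
\begin{enumerate}
\item an out-neighbour of $v$, giving at most $\Delta-1$ choices of $x$ each;
\item one of the at most $\Delta-a$ in-neighbours of $v$. Such an $s$ precedes $v$ and hence $x$, so both lie among its at most $d$ out-neighbours, giving at most $d-1$ choices of $x$ each.
\end{enumerate}
For $\Delta\ge d$ this shows that $H$ has degeneracy at most $d(\Delta-1)+(\Delta-d)(d-1)$. Hence $\h(G)\le\vc(G)+d(\Delta-1)+\Delta(d-1)-\bigl(d(d-1)-1\bigr)$, which is within the stated bound exactly when $d\ge 2$. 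For $1\le\Delta<d$ the crude bound $\chi(H)\le\Delta(\Delta-1)+1$ already suffices.

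So your method, done this way, proves the statement directly for $d\ge 2$ (with $\Delta\ge 1$). For $d=1$ it gives only $\vc(G)+\Delta(G)$, which is consistent with the counterexamples.
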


The techniques to prove these theorems are similar. The lower bound is simply the constraint that vertices at distance two should have distinct colors. For the upper bounds, in both theorems, the authors first present a harmonious coloring of the graph using exactly the proposed upper bounds~$+1$~color, and then a trick is applied to save such extra color. This is how they obtain a harmonious coloring on the graph with exactly $\vc(G)+\Delta(G)(\Delta(G)-1)$ colors at the end of~\Cref{2019teo1} (resp $\vc(G)+d(\Delta(G)-1)+\Delta(G)(d-1)$ colors in~\Cref{2019teo2}).

However, since it has no relation to the graph being $d$-degenerate but only to the maximum degree of $G$, in~\Cref{2019teo2}, the trick for saving a color is misapplied, though it works well for~\Cref{2019teo1}.

As a consequence of Theorem~\ref{2019teo2}, the authors present the corollary:

\begin{cor}[\!\!\cite{kolay2019harmonious}]
\label{cor2019}
If $G$ is a forest with at least one edge, then $\Delta(G)+1\leq\h(G)\leq\vc(G)+\Delta(G)-1$.
\end{cor}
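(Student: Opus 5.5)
The corollary is meant to follow directly from \Cref{2019teo2}. A forest with at least one edge is $1$-degenerate, since every nonempty subgraph of a forest has a vertex of degree at most one. Substituting $d=1$ into the bound of \Cref{2019teo2} kills the term $\Delta(G)(d-1)$ and leaves $\vc(G)+(\Delta(G)-1)$. The lower bound $\Delta(G)+1\le \h(G)$ needs no degeneracy at all. A vertex of maximum degree and its $\Delta(G)$ neighbours are pairwise at distance at most two, so they must all receive distinct colours.

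Before accepting the upper bound, I would test it on the smallest forests, because the discussion above says the colour-saving step in \Cref{2019teo2} is misapplied. Stars are the obvious test case, and they break the statement immediately. For $G=K_{1,\Delta}$ with $\Delta\ge 1$ we have $\vc(G)=1$, so the claimed upper bound is $1+\Delta-1=\Delta$. But the star has diameter at most two, so $\h(G)=\Delta+1$. The lower and upper bounds of the corollary therefore contradict each other, and the corollary as stated cannot be proved. This is consistent with the remark that \Cref{2019teo2} fails for $d=1$.

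So my plan is to present the star as a counterexample rather than give a proof. I would then prove a corrected version, $\h(G)\le \vc(G)+\Delta(G)$ for forests, which is the bound $\vc(G)+d(\Delta(G)-1)+1$ announced in the introduction specialised to $d=1$. To prove it, give each vertex of a minimum vertex cover $S$ its own colour. The vertices of $V\setminus S$ form an independent set, and each of them has all its neighbours in $S$. Colour $V\setminus S$ using a palette of $\Delta(G)$ further colours, chosen so that two vertices of $V\setminus S$ get different colours whenever they share a neighbour in $S$. This is needed because each edge has the form $\{s,x\}$ with $s\in S$ and $x\notin S$, and $s$ carries a unique colour, so two such edges can collide only if they share $s$.

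The main obstacle is whether $\Delta(G)$ colours really suffice for this second step. For a single cover vertex it is clear: $s$ has at most $\Delta(G)$ neighbours outside $S$, and they need pairwise distinct colours. The difficulty is that a vertex $x\notin S$ may lie in the neighbourhoods of several cover vertices, and these constraints interact. I would handle this with the $1$-degeneracy (tree) structure. Root each tree and process the vertices of $V\setminus S$ top-down, colouring each $x$ greedily. The constraints on $x$ come from the neighbourhoods of its cover neighbours. The parent $p$ of $x$ (if $p\in S$) contributes at most $\Delta(G)-1$ already-coloured siblings. Children of $x$ in $S$ have not yet had their other neighbours coloured, so they impose nothing at this point. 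This leaves at least one colour free for $x$ and gives the corrected bound.
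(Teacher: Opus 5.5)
Your proposal is correct and matches the paper. The paper likewise refutes this corollary with stars (and $2K_2$). Its corrected bound $\h(G)\le\vc(G)+d(\Delta(G)-1)+1$ becomes your $\vc(G)+\Delta(G)$ at $d=1$, and it is proved by the same idea: give each cover vertex its own colour, then colour the remaining vertices greedily along a degeneracy ordering, of which your top-down rooting is the $d=1$ case. One small correction to your wording: the already-coloured vertices forbidden through the parent $p$ are all neighbours of $p$ other than $x$, including $x$'s grandparent when it lies outside $S$, not only siblings; the count $\Delta(G)-1$ is unchanged.
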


Corollary~\ref{cor2019} is also incorrect. For a counter-example to both statements, let $2K_2$ be the simple graph consisting of four vertices and two edges without a common endpoint. 
Note that $\vc(2K_2) = 2$, $\Delta(2K_2) = 1$, and $2K_2$ is 1-degenerate, but $\h(2K_2)=3$. Another possible counter-example is a star $S_k$, for $k\geq 2$, which is a tree on $k+1$ vertices such that only one is not a leaf. Note that $\vc(S_k) = 1$, $\Delta(G) = k$; again, it is a 1-degenerate graph, but $\h(S_k) = k+1$.

When trying to fix Theorem~\ref{2019teo2}, we were able to improve it.

\begin{teo}
        If $G$ is a $d$-degenerate graph, then $\Delta(G)+1\leq\h(G)\leq \vc(G)+d(\Delta(G)-1)+1$.
\end{teo}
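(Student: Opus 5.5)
The lower bound $\Delta(G)+1\le \h(G)$ is the same as in Theorem~\ref{2019teo2}. A vertex $v$ of maximum degree and its $\Delta(G)$ neighbours pairwise lie at distance at most two. Hence they must receive pairwise distinct colours in any harmonious colouring. The work is in the upper bound, which I would prove constructively.

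\textbf{Construction.} Fix a minimum vertex cover $S$, so $|S|=\vc(G)$, and let $I=V(G)\setminus S$. Since $I$ is independent, every edge is either inside $S$ or joins $S$ to $I$. I would give each vertex of $S$ its own colour, using $\vc(G)$ colours. I would colour $I$ from a disjoint palette of $d(\Delta(G)-1)+1$ new colours, under one rule: two vertices of $I$ with a common neighbour (necessarily in $S$) get distinct colours.

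\textbf{Checking harmony.} Properness is immediate, because colours on $S$ are private and $I$ is independent.
\begin{itemize}
\item Two edges inside $S$ have distinct colour pairs, since $S$ is rainbow.
\item An edge inside $S$ and an edge $su$ with $u\in I$ differ, because $c(u)$ lies in the new palette.
\item Two edges $su$ and $s'w$ with $u,w\in I$ can clash only if $s=s'$ (colours on $S$ are private) and $c(u)=c(w)$. The rule forbids this.
\end{itemize}
So everything reduces to colouring the auxiliary ``conflict graph'' $H$ on $I$, where $u\sim w$ if they share a neighbour in $S$, with $d(\Delta(G)-1)+1$ colours. It suffices to show that $H$ is $d(\Delta(G)-1)$-degenerate and then colour greedily.

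\textbf{Main obstacle: degeneracy of $H$.} I would work in the bipartite graph $B$ of $S$--$I$ edges. $B$ is $d$-degenerate, so its edges can be oriented acyclically with out-degree at most $d$ at every vertex. I would colour $I$ greedily in reverse degeneracy order and count, for each $u\in I$, the already coloured conflicting $w$ through a common neighbour $s$.
\begin{itemize}
\item \emph{$s$ is an out-neighbour of $u$.} There are at most $d$ such $s$, each with at most $\Delta(G)-1$ further neighbours. This gives at most $d(\Delta(G)-1)$ conflicts, exactly the target.
\item \emph{$u$ is an out-neighbour of $s$.} Then $w$ must also be an out-neighbour of $s$, giving at most $d-1$ choices per $s$. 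There can be up to $\Delta(G)$ such $s$, and this naive count reproduces the extra $\Delta(G)(d-1)$ term of Theorem~\ref{2019teo2}.
\end{itemize}
So the crux is to kill the second case. My first attempt would be to choose the degeneracy order on $B$ so that $I$-vertices are removed before their $S$-neighbours whenever possible; this is legitimate when an $I$-vertex has degree at most $d$ in the current subgraph. The aim is that every $s\in S$ comes later than all its $I$-neighbours, making the second case vacuous. When this fails, the offending $s$ has small current degree. I would then try to recolour: either move such $s$ into the $I$-side by exchanging it with its neighbours in the cover, or observe that vertices of $I$ of degree larger than $d$ force the corresponding $s$ to have few later neighbours. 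In that situation I would bound the second-case conflicts jointly with the first, keeping the total at most $d(\Delta(G)-1)$.

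I expect this step to be the main difficulty. If it cannot be closed, the fallback is a counting argument on any subset $X\subseteq I$: compare $\sum_{s}\deg_X(s)(\deg_X(s)-1)$ with the edge bound $e(B[X\cup N(X)])\le d\,(|X|+|N(X)|)$. This would exhibit a vertex of $X$ with $H$-degree at most $d(\Delta(G)-1)$.
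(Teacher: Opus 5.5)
Your lower bound and the harmony check are fine. The gap is the step everything hinges on: colouring the conflict graph $H$ on $I$ with $d(\Delta(G)-1)+1$ colours. You never prove it, and it is false, even when $S$ is a minimum vertex cover.

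Here is a counterexample. Let $G$ have vertices $a_1,\dots,a_4$, $x_{ij}$ for $\{i,j\}\subseteq\{1,2,3,4\}$, $b_1,b_2,b_3$ and $y_1,y_2,y_3$. Join $x_{ij}$ to $a_i$ and $a_j$; join $b_1$ to $x_{12},x_{34}$, $b_2$ to $x_{13},x_{24}$, $b_3$ to $x_{14},x_{23}$; and join $y_\ell$ to $b_\ell$.
\begin{itemize}
\item $\Delta(G)=3$.
\item $G$ is $2$-degenerate: delete the $y_\ell$, then the $b_\ell$, then the $x_{ij}$.
\item $S=\{a_1,\dots,a_4,b_1,b_2,b_3\}$ is a minimum vertex cover, since the matching $a_1x_{12},a_2x_{23},a_3x_{34},a_4x_{14},b_1y_1,b_2y_2,b_3y_3$ has size $7$. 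One checks it is the only minimum cover.
\end{itemize}
Any two $x_{ij}$ have a common neighbour in $S$: some $a_i$ if the index pairs meet, some $b_\ell$ if they are disjoint. So $H$ contains a $K_6$, while your palette for $I$ has only $d(\Delta(G)-1)+1=5$ colours. Hence $H$ is neither $4$-degenerate nor $5$-colourable.

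No reordering or recolouring can rescue this. Nor can your counting fallback, which never uses minimality of $S$; here every $x_{ij}$ has $H$-degree $5$ inside $X=\{x_{ij}\}$. The scheme ``private colours on a minimum cover, $d(\Delta(G)-1)+1$ shared colours on the rest'' cannot prove the bound.

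The paper uses exactly this scheme. It claims, by induction along a degeneracy ordering, that the scheme works for every vertex cover $S$.
\begin{itemize}
\item Its Case~2 ($v_n\notin S$) is your first case, carried out in an ordering of $G$ rather than of $B$. There $v_n$ has at most $d$ neighbours, hence at most $d(\Delta(G)-1)$ vertices at distance two.
\item Your second case is hidden in its Case~1 ($v_n\in S$ gets a fresh colour). Harmony is asserted there without checking whether two $I$-neighbours of $v_n$ already share a colour. This is harmless only for $d=1$, where $v_n$ has a single neighbour.
\end{itemize}
In the example, that induction may colour $x_{12}$ and $x_{34}$ alike before $b_1$ is added. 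In fact the strengthened statement fails outright: private colours on $S$ force at least $7+6=13>12=|S|+d(\Delta(G)-1)+1$ colours.

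So you located the real difficulty correctly, and the paper's argument does not overcome it either. The bound itself survives this example. Colour the $x_{ij}$ with $1,\dots,6$, the vertices $a_1,\dots,a_4$ with $7,\dots,10$, all $b_\ell$ with $11$, and each $y_\ell$ with the colour of $a_\ell$. This is a harmonious colouring with $11\le 12$ colours, which suggests a correct proof must let cover vertices share or reuse colours.
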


\begin{proof}
    The lower bound is the same bound shown by \cite{kolay2019harmonious}. Let $n=|V(G)|$.
    
    To prove the upper bound, we use induction on $n$ to prove a stronger statement: if $S$ is a vertex cover of $G$, then there is a harmonious coloring $c$ of $G$ with at most $|S|+d(\Delta(G)-1)+1$ colors such that each vertex $v$ of $S$ has an \emph{individual color}, i.e. no other vertex in $V(G)\setminus \{v\}$ is colored with color $c(v)$.

    If $G$ is trivial, i.e. $n=1$,  then $G$ is 0-degenerate and $\Delta(G)=0$. So the result follows since, regardless whether $S$ is empty or not, $\h(G)=1 \leq  |S|+d(\Delta(G)-1)+1$.
                              
    Let $G$ be a $d$-degenerate graph with $n\geq 2$ vertices, and let $S$ be a vertex cover of $G$. Since $G$ is $d$-degenerate, there exists an ordering $\sigma=(v_1,\ldots, v_n)$ over $V(G)$ such that any vertex $v_i$ has at most $d$ neighbors with indices smaller than $i$, for every $i\in\{1,\ldots, n\}$. Let $H=G-v_n$. Since $S$ is a vertex cover of $G$, note that $S' = S\setminus\{v_n\}$ is a vertex cover of $H$. Moreover, $H$ is $d$-degenerate. By inductive hypothesis, let $c'$ be a harmonious $(|S'|+d(\Delta(H)-1)+1)$-coloring of $H$ such that the vertices of $S'$ have individual colors.
    Now consider the two following cases:
    \begin{enumerate}[ 1) ]
        \item $v_n\in S$: In this case, $|S| = |S'|+1$. So we can extend $c'$ to $V(G)$ by using a new color only in $v_n$, creating a harmonious $(|S'|+d(\Delta(H)-1)+2)$-coloring $c$ of $G$. Note that: $$|S'|+d(\Delta(H)-1)+2= |S|+d(\Delta(H)-1)+1\leq |S|+d(\Delta(G)-1)+1.$$
        So $G$ can be harmoniously colored with $|S|+d(\Delta(G)-1)+1$ colors, and in $c$ each vertex of $S$ uses an individual color.

        \item $v_n\notin S$: In this case, $S = S'$. Since $S$ is a vertex cover of $G$, then $V(G)\setminus S$ is an independent set of $G$. By inductive hypothesis, each vertex of $S=S'$ is colored with an individual color in $c'$. Note that if we extend $c'$ to a coloring $c$ of $V(G)$ by coloring $v_n$ with a color that does not occur in $S = S'$, nor in the vertices at distance two from $v_n$, then $c$ is a harmonious coloring of $G$. By the choice of $v_n$, $v_n$ has at most $d$ neighbors in $G$, and thus at most $d(\Delta(G)-1)$ vertices at distance two. This is one unit less than the $d(\Delta(G)-1)+1$ colors that do not occur in $S=S'$ in the coloring $c'$. Thus, we can extend $c'$ to a harmonious $(|S|+d(\Delta(G)-1)+1)$-coloring $c$ of $G$ such that the vertices of $S$ have individual colors.
        
    \end{enumerate}
    The result follows by considering a minimum vertex cover $S$, i.e. with $|S| = \vc(G)$.
\end{proof}

\section{Identifications of vertices at distance three}
\label{sec::theory}

Let $G\circ uv$ denote the graph obtained from $G$ by the \emph{identification} of two non-adjacent distinct vertices $u,v\in V(G)$, with the removal of multiple edges in case they appear. Recall that such an operation removes $u$ and $v$, adds a new vertex $w$, and adds an edge from $w$ to each vertex adjacent to $u$ or $v$.

It is well-known that $\chi(G) = \n(G)$ if, and only if, $G$ is complete. Due to Zykov~\cite{zykov1949some}, it is also known that, given $u,v\in V(G)$ and $uv\notin E(G)$, the chromatic number of $G$ satisfies 
\begin{equation}\label{eq:zykov}
    \chi(G)=\min\{\chi(G+uv),\chi(G\circ uv)\}.
\end{equation}
With similar ideas, one can prove that:

\begin{prop}
\label{prop:identification-chromatic-number}
For any graph $G$, the following holds:
\begin{enumerate}[(a)]
    \item\label{itema} if $\chi(G)=k$ then there exists a sequence of identifications of non-adjacent vertices to be applied to $G$ such that the remaining graph is $K_k$; and
    \item\label{itemb} if $K_k$ is obtained from $G$ by a sequence of identifications of non-adjacent vertices, then $\chi(G)\leq \chi(K_k)=k$.
\end{enumerate}
\end{prop}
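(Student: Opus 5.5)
For part (a), I will not use Zykov's recursion (\ref{eq:zykov}) directly. Instead I will work with the color classes of an optimal coloring. Let $c$ be a proper $k$-coloring of $G$ with $k=\chi(G)$, and let $V_1,\dots,V_k$ be its color classes. Each $V_i$ is nonempty, since otherwise $G$ would be $(k-1)$-colorable. Each $V_i$ is also an independent set.

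The plan is to repeatedly identify two vertices of the same class. At each step, the vertices of $V_i$ are pairwise non-adjacent, so the identification is legal. I then let the new vertex $w$ inherit the color $i$. The key check is that the resulting coloring is still proper. A neighbor of $w$ is a neighbor of $u$ or of $v$ in the old graph, so its color differs from $i$. Hence the invariant holds: the current graph has a proper coloring whose classes are independent sets, and these classes are the images of $V_1,\dots,V_k$.

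Iterating until every class is a single vertex leaves exactly $k$ vertices $w_1,\dots,w_k$. It remains to show that this graph is complete. Suppose $w_iw_j$ were not an edge. Then identifying $w_i$ and $w_j$ gives a graph on $k-1$ vertices, which is trivially $(k-1)$-colorable. Pulling back, this yields a proper $(k-1)$-coloring of $G$: merge classes $i$ and $j$. This is proper because no edge of $G$ joins $V_i$ to $V_j$, since such an edge would survive as the edge $w_iw_j$ (identification only merges edges, it never deletes them). That contradicts $\chi(G)=k$. So the final graph is $K_k$.

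For part (b), the key lemma is that identification never decreases the chromatic number: $\chi(G)\le\chi(G\circ uv)$. Given a proper coloring of $G\circ uv$, I color both $u$ and $v$ with the color of $w$ and keep all other colors. This is proper because $u$ and $v$ are non-adjacent, and every neighbor of $u$ or $v$ is a neighbor of $w$. Applying this along the sequence of identifications gives $\chi(G)\le\chi(K_k)=k$.

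The only delicate point is the edge-preservation argument in (a), which guarantees completeness of the final graph. I expect it to be routine.
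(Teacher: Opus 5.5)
Your proof is correct. The paper does not write out a proof of this proposition; it only says it follows ``with similar ideas'' to Zykov's recursion~\eqref{eq:zykov}. The argument it does spell out is the harmonious analogue, Lemma~\ref{lemteo1} with Theorem~\ref{idlema}, and your part~(b) is exactly the pull-back used there. Your part~(a) is organized differently. The paper's pattern is stepwise: take an optimal coloring of the \emph{current} graph, identify two vertices of one class to get $\chi(G\circ uv)\le\chi(G)$, and upgrade this to equality via~(b). It repeats until no non-adjacent pair remains, then invokes the characterization ``$\chi(H)=|V(H)|$ iff $H$ is complete'' (the analogue of Lemma~\ref{lemad2}). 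You instead fix one optimal coloring and collapse each class to a single vertex, which is the ``all at once'' variant the paper attributes to Kolay et al. You then get completeness directly from the minimality of $k$. So you need neither the intermediate invariant $\chi=k$ nor a separate characterization of graphs with $\chi=|V|$; the stepwise route, in exchange, isolates a reusable one-step lemma (some single identification preserves $\chi$). The point you flag as delicate is in fact unnecessary. If $w_iw_j\notin E$, identifying them and applying~(b) along the whole sequence already gives $\chi(G)\le k-1$, with no edge-preservation bookkeeping. Finally, your choice to avoid~\eqref{eq:zykov} is well founded. For a fixed non-adjacent pair the minimum may be attained only on the $G+uv$ branch; for example, identifying the endpoints of $P_4$ creates a triangle. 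So an identification-only sequence really does require choosing pairs inside color classes of an optimal coloring, as both arguments do.
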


In the sequel, we present similar observations to the case of Harmonious Colorings. Let us first present a quite simple statement proved by~\cite{kundrik}. Let $\diam(G)$ denote the diameter of $G$, i.e. the maximum distance between two vertices. 

\begin{lema}[\!\!\cite{kundrik}]\label{lemad2}
For any graph $G$, $\h(G)=\n(G)$ if, and only if, $\diam(G)\leq 2$.
\end{lema}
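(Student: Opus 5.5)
Both directions follow from one observation: in a harmonious coloring of $G$, any two distinct vertices at distance $1$ or $2$ must receive different colors. For distance $1$ this is just properness. For distance $2$, let $u$ and $v$ have a common neighbor $w$. If $c(u)=c(v)$, then the distinct edges $uw$ and $vw$ satisfy $\{c(u),c(w)\}=\{c(v),c(w)\}$, which contradicts the line-distinguishing property.

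\emph{($\Leftarrow$)} Suppose $\diam(G)\leq 2$. Then every pair of distinct vertices is at distance $1$ or $2$; there is no infinite distance, since the diameter is finite. By the observation, all $\n(G)$ vertices receive pairwise distinct colors, so $\h(G)\geq \n(G)$. The reverse inequality $\h(G)\leq\n(G)$ is trivial: giving every vertex its own color is proper, and two distinct edges then always have distinct endpoint sets, hence distinct color pairs.

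\emph{($\Rightarrow$)} I argue by contrapositive. Suppose $\diam(G)\geq 3$, which includes the disconnected case where the distance is $+\infty$. Choose distinct $u,v$ with $\dist(u,v)\geq 3$, and define $c$ by giving $u$ and $v$ one common color and every other vertex its own new color. This uses $\n(G)-1$ colors, and it suffices to show $c$ is harmonious.
\begin{itemize}
\item \textbf{Properness.} The only repeated color is on the pair $\{u,v\}$, and $uv\notin E(G)$.
\item \textbf{Line-distinction.} Suppose two distinct edges $e,f$ had the same color pair. Since all other colors appear on exactly one vertex, $e$ and $f$ must differ by swapping $u$ for $v$. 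Writing $e=ux$ and $f=vx$ with $x\notin\{u,v\}$, the vertex $x$ is a common neighbor of $u$ and $v$. Then $\dist(u,v)\leq 2$, a contradiction.
\end{itemize}
Hence $\h(G)\leq\n(G)-1<\n(G)$.

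The main point needing care is exactly this line-distinction check, together with remembering to treat disconnected graphs as having diameter $+\infty$. This construction is also the basic case of the identification idea of this section: a harmonious coloring of $G\circ uv$ with $u,v$ at distance at least three yields one of $G$.
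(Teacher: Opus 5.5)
Your proof is correct and matches the paper's. For $\diam(G)\geq 3 \Rightarrow \h(G)<\n(G)$, the paper uses exactly your coloring: one shared color on a pair at distance at least three, and an individual color on every other vertex. The converse, which the paper dismisses as trivial, is precisely your distance-two observation. Your explicit line-distinction check (the only possible collision is $ux$ versus $vx$, which forces a common neighbor) fills in the step the paper leaves to the reader.
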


\begin{proof}
Suppose first that $\diam(G)>2$. Then, there are $u,v\in V(G)$ such that $\dist_G(u,v)\geq 3$. Consider that $V(G)=\{u,v,w_1,w_2,\ldots,w_{\n(G)-2}\}$. Let us define a coloring $c:V(G)\to\{1,\ldots, \n(G)-1\}$ as follows: $c(u)=c(v)=1$ and $c(w_i)=i+1$, for every $i\in\{1,\ldots,\n(G)-2\}$. Note that $c$ is a harmonious coloring of $G$ using $\n(G)-1$ colors, and so $\h(G)<n$. The converse is trivial.
\end{proof}

Concerning vertex identifications, let us prove a corresponding version of Proposition~\ref{prop:identification-chromatic-number} to the harmonious chromatic number.

\begin{lema}\label{lemteo1}
    Let $G$ be an graph such that $\diam(G)\geq 3$. Then,
    \begin{enumerate}[(a)]
        \item\label{item1} there is a pair $u,v\in V(G)$ such that $\dist_G(u,v)\geq 3$ whose identification results in a graph $H$ with $\h(H)=\h(G)$; and
        \item\label{item2} if $H$ is obtained from $G$ by the identification of $u,v\in V(G)$ such that $\dist_G(u,v)\geq 3$, then $\h(G)\leq \h(H)$.
    \end{enumerate}
\end{lema}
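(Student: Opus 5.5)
Part \eqref{item2} comes first, because part \eqref{item1} will use it. Let $H=G\circ uv$ with $\dist_G(u,v)\geq 3$, and call the new vertex $w$. Take an optimal harmonious coloring $c_H$ of $H$ and pull it back to $G$: set $c(u)=c(v)=c_H(w)$ and $c(x)=c_H(x)$ for every other $x$. Let $\pi:V(G)\to V(H)$ be the quotient map, sending $u,v$ to $w$ and fixing all other vertices.

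\emph{Properness.} Since $u,v$ are non-adjacent, every edge $xy$ of $G$ maps to an edge $\pi(x)\pi(y)$ of $H$, so $c(x)\neq c(y)$.

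\emph{Line-distinguishing.} Let $xy\neq x'y'$ be edges of $G$. If their images in $H$ are distinct edges, their color pairs differ because $c_H$ is harmonious. Otherwise the two edges merge in $H$. This can only happen if they are $ua$ and $va$ for a common neighbor $a$ (note that $u$ and $v$ are never adjacent to each other). That would give $\dist_G(u,v)\leq 2$, a contradiction. Hence $c$ is a harmonious coloring of $G$ with $\h(H)$ colors, and $\h(G)\leq\h(H)$.

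For \eqref{item1}, take an optimal harmonious coloring $c$ of $G$ with $\h(G)$ colors. Because $\diam(G)\geq 3$, Lemma~\ref{lemad2} gives $\h(G)<\n(G)$. So by pigeonhole some color class contains two distinct vertices $u,v$. Two vertices with the same color cannot be adjacent, since $c$ is proper. They cannot be at distance two either: a common neighbor $a$ would make the edges $ua$ and $va$ receive the same color pair. Hence $\dist_G(u,v)\geq 3$.

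Now identify $u$ and $v$ into $w$ and define $c_H(w)=c(u)$, keeping all other colors. To check properness, each neighbor of $w$ is a neighbor of $u$ or of $v$ in $G$, so its color differs from $c(u)=c(v)$.

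The main step is line-distinguishing. Each edge of $H$ is the image of at least one edge of $G$ with the same color pair. Pick one such preimage for each edge of $H$. Distinct edges of $H$ then have distinct preimages in $G$, and these carry distinct color pairs because $c$ is harmonious. So $c_H$ is harmonious, which gives $\h(H)\leq\h(G)$. Combined with \eqref{item2}, this yields $\h(H)=\h(G)$.

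The only delicate point is the multi-edge bookkeeping in this last step, i.e.\ checking that merged edges cannot create a color-pair collision. Choosing preimages handles it, because each edge of $H$ has the same color pair as every one of its preimages.
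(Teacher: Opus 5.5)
Your proof is correct and follows the paper's route. For (b) you pull back a harmonious coloring of $H$ by giving $u$ and $v$ the color of the merged vertex; for (a) you take an optimal coloring of $G$, use Lemma~\ref{lemad2} and pigeonhole to get two same-colored vertices (necessarily at distance at least $3$), push the coloring down to $G\circ uv$, and combine with (b). Your preimage-selection argument in (a) and your explicit treatment of the merged edges $ua,va$ in (b) justify steps that the paper only asserts.
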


\begin{proof}
    Consider $|V(G)|=n$.
    Since $\diam(G)\geq 3$, by Lemma~\ref{lemad2} we deduce that $\h(G)=k\leq n(G)-1$. Hence, in every harmonious $k$-coloring of $G$ there is a pair of distinct vertices in the same color class. Let $c\colon V(G)\to \{1, \ldots, k\}$ be a harmonious $k$-coloring of $G$, and let $u,v\in V(G)$ be such that $c(u)=c(v)$. Since $c$ is a harmonious coloring, we have that $\dist_G(u,v)\geq 3$. Consider $V(G)=\{u,v,w_1,\ldots,w_{n-2}\}$ and let $H = G\circ uv$ be the graph obtained from the identification of $u$ and $v$. Denote by $w$ the vertex resulting from this operation. So, $V(H)=\{w,w_1,\ldots,w_{n-2}\}$.
    
    Define the coloring $c'$ of $H$ as $c'(w_i)=c(w_i)$ for each $1\leq i\leq n-2$ and $c'(w)=c(v)$. Since we obtain $w$ from the identification of $u$ and $v$ and $c$ is a harmonious coloring of $G$ such that $c(u)=c(v)$, $c'$ is a harmonious coloring of $H$. Therefore, $\h(H)\leq k=\h(G)$. The proof of (\ref{item1}) is completed with the proof of (\ref{item2}) that follows.

    Let $c'$ be a harmonious $p$-coloring of $H$, $p=\h(H)$. Define $c\colon V(G)\to \{1,\ldots, p\}$ such that $c(w_i)=c'(w_i)$ for each $1\leq i\leq n-2$, and $c(u)=c(v)=c(w)$. We will show that $c$ is a harmonious $p$-coloring of $G$. By the construction of $H$ and knowing that $uv\notin E(G)$, certainly $c$ is a proper coloring of $G$. Since $c'$ is a harmonious coloring, if there are two edges $e_1,e_2\in E(G)$ with the same colors in their endpoints, then certainly $v$ is an endpoint of one of these edges and $u$ is an endpoint of the other edge. Without lost of generality, consider $e_1=uz$, $e_2=vy$, $c(u)=c(v)=1$ and $c(z)=c(y)=2$. However, $u\text{ and }v$ are identified to obtain $H$, so $\dist_H(z,y)\leq 2$. Thus, because $c'$ is a harmonious coloring, the equality $c(z)=c(y)=c'(z)=c'(y)$ is not possible. 
\end{proof}

One may naturally deduce what happens after applying Lemma~\ref{lemteo1} iteratively:

\begin{teo}\label{idlema}
For any graph $G$, the following hold:
\begin{enumerate}[(a)]
    \item\label{item3} if $\h(G)=k$, then there exists a sequence of identifications of vertices to be applied to $G$ such that, at each step, the identified vertices have distance at least 3 and such that the obtained graph $H$ in the end has $k$ vertices and diameter at most 2; and
    \item\label{item4} if $H$ is obtained from $G$ by a sequence of identifications of vertices such that, at each step, the identified vertices have distance at least 3, then $\h(G)\leq \h(H)$.
\end{enumerate}
\end{teo}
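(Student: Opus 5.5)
Both parts follow by iterating Lemma~\ref{lemteo1}, with induction on the number of vertices for (\ref{item3}) and on the length of the identification sequence for (\ref{item4}).

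For (\ref{item3}), we induct on $\n(G)$. If $\diam(G)\leq 2$, Lemma~\ref{lemad2} gives $\h(G)=\n(G)=k$, so the empty sequence works with $H=G$. If $\diam(G)\geq 3$, Lemma~\ref{lemteo1}(\ref{item1}) gives vertices $u,v$ with $\dist_G(u,v)\geq 3$ such that $G_1=G\circ uv$ satisfies $\h(G_1)=\h(G)=k$. Since $\n(G_1)=\n(G)-1$, the induction hypothesis applied to $G_1$ gives a sequence of identifications, each of vertices at distance at least $3$ in the current graph. This sequence ends at a graph $H$ with $k$ vertices and diameter at most $2$. Prepending the identification of $u,v$ gives the required sequence for $G$. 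The process terminates because the vertex count strictly decreases, and the final graph has exactly $k$ vertices because $\h$ is preserved at every step and equals the vertex count at the end, again by Lemma~\ref{lemad2}.

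For (\ref{item4}), let $G=G_0,G_1,\ldots,G_t=H$ be the graphs in the sequence, where $G_{i+1}=G_i\circ u_iv_i$ and $\dist_{G_i}(u_i,v_i)\geq 3$. Lemma~\ref{lemteo1}(\ref{item2}) gives $\h(G_i)\leq\h(G_{i+1})$ for each $i$. Chaining these inequalities yields $\h(G)\leq\h(H)$.

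The main subtlety is that Lemma~\ref{lemteo1}(\ref{item2}) is stated under the hypothesis $\diam(G)\geq 3$. This causes no trouble, because the existence of a pair at distance at least $3$ in $G_i$ already forces $\diam(G_i)\geq 3$, so the hypothesis holds at every step. One should also note that distance $+\infty$, that is, vertices in different components, counts as distance at least $3$. The proof of Lemma~\ref{lemteo1} covers this case unchanged, since it only uses non-adjacency and the absence of a common neighbour. Apart from this, the argument is a routine iteration.
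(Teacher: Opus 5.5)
Your proposal is correct and follows the paper's proof. For (a), the paper likewise applies Lemma~\ref{lemteo1}(\ref{item1}) repeatedly until the diameter is at most $2$ and then invokes Lemma~\ref{lemad2}; for (b), it chains Lemma~\ref{lemteo1}(\ref{item2}) along the sequence. Your explicit induction on $\n(G)$ and your remark that a pair at distance at least $3$ already forces $\diam(G_i)\geq 3$ make precise what the paper leaves implicit in ``apply exhaustively.''
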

\begin{proof}
To prove~(\ref{item1}), suppose first that $\h(G) = k$. If $\diam(G)\leq 2$, then $\h(G)=\n(G)$ and one may take an empty sequence. Suppose now that $\diam(G)\geq 3$. Thus, it is enough to apply Lemma~\ref{lemteo1}(\ref{item1}) exhaustively until the resulting graph has diameter at most 2, and then we apply Lemma~\ref{lemad2}. 

The second statement follows by successive applications of Lemma~\ref{lemteo1}(\ref{item2}).
\end{proof} 

Theorem~\ref{idlema} is our source of inspiration for a heuristic we propose to obtain feasible harmonious colorings in Section~\ref{sec::algorithm}. We must also emphasize that Statement~(\ref{item3}) of Theorem~\ref{idlema} is proved in~\cite{kolay2019harmonious}, with the difference that they identify all vertices in the same color class of an optimal harmonious coloring of $G$ at once.

Another remark is that Zykov's equation~\eqref{eq:zykov} to the chromatic number does not translate easily as adding edges to $G$ not only creates new constraints to obtain a proper coloring, but also to obtain a harmonious coloring. A natural question is:

\begin{problem}
    Can one find an equation similar to Zykov's to the harmonious chromatic number?
\end{problem}

\section{Integer Programming Formulations}
\label{sec::programming}
Without loss of generality, we assume that the input graph $G$ has no isolated vertices, as such a vertex would not alter the harmonious chromatic number. This will simplify the notation and the presentation of the formulations in this section.

\subsection{Standard Model}

Given a graph $G=(V,E)$ and an upper bound $k$ on $\h(G)$, we define a binary variable $x_{v,i}$, for each $v\in V$ and $1\leq i \leq k$, to indicate whether or not the vertex $v$ receives the color $i$, as well as a binary variable $w_i$, for $1\leq i \leq k$, representing whether or not the color $i$ is used. Similar variables are used in a well-known formulation for the classical vertex coloring problem~\cite{Coll02Facets}. Also, for $u,v\in V$, let $N(u)$ stand for the set of vertices that are adjacent to $u$, and $N(u,v)=N(u)\cup N(v)$. Note that if 
$u$ and $v$ are neighbors, both belong to $N(u,v)$. Then, one can provide a simple formulation for the harmonious coloring problem as follows. 
\begin{mini!}
{w,x}{\sum_{i=1}^{k} w_{i} \label{std:obj} }{}{(STD)}
\addConstraint{\sum_{i=1}^{k} x_{v,i}}{= 1}{ \forall v \in V \label{std:color}}
\addConstraint{x_{v,i}+x_{u,i}}{\leq w_{i}}{ \forall i \in [k], \forall uv \in E \label{std:edge}}
\addConstraint{\sum\limits_{v \in N(u)}x_{v,i}}{\leq w_i}{\forall i \in [k], \forall u \in V \label{std:harm1}}
\addConstraint{\sum\limits_{w \in N(u,v)}x_{w,j} }{\leq 1 + w_j + w_i - x_{v,i} - x_{u,i} \quad}
{\forall uv \notin E, u\neq v, \forall i,j \in [k]\label{std:harm2}}
\addConstraint{x_{v,i}, w_{i}}{\in \{0,1\}}{\forall v \in V, \forall i \in [k] \label{std:limits}}
\end{mini!}

The objective function \eqref{std:obj} is what one might expect. It minimizes the number of used colors, represented by the sum of variables $w_i$.
Constraints~\eqref{std:color} ensure that each vertex must have exactly one color.
Constraints~\eqref{std:edge} force the coloring to be proper, i.e. two adjacent vertices must be assigned distinct colors. Since there is no isolated vertex, they also allow a vertex to receive a certain color only if such a color is marked as used.
Constraints~\eqref{std:harm1} guarantee the line-distinguishing property for two edges with a common endpoint, as they prevent two non-adjacent vertices that share a common neighbor from being assigned the same color.
Constraints~\eqref{std:harm2} extend this property to the remaining pairs of edges. Indeed, if two non-adjacent vertices $u$ and $v$ are colored with the same color $i$, these constraints forbid the repetition of any color $j$ in the union of the neighborhoods of $u$ and $v$: the left-hand side must be zero (if $w_j=0$) or at most 1 (if $w_j=1$). Notice that if $u$ or $v$ is not colored with the color $i$, the constraint also works since its right-hand side is at least $1+w_j$, thus allowing this number of vertices with the color $j$ in $N(u,v)$. Finally, 
Constraints~\eqref{std:limits} represent the variables' domain. 

\subsection{Representatives Formulation}

Since a harmonious $k$-coloring of a graph $G$ is a proper coloring, it induces a partition $\{S_1, \dots, S_k\}$ of the vertex set $V(G)$ into $k$ independent sets of $G$.
Suppose that, for each color $i$, we choose a vertex in $S_i$ to be the representative of the color class $S_i$. Then the vertices can be in one of two states: it represents its own color or there exists another vertex that represents its color~\cite{campelo:04}. Notice that each vertex $v$ can only be represented by itself and by vertices that are non-adjacent nor share a common neighbor, i.e. vertices at distance at least three from $v$. Hence, for every $v \in V$, the set of its possible representatives (besides itself) is
$$N^*(v) = \{ u \mid u\notin N(v)\text{ and }N(v) \cap N(u) = \emptyset\} = \{u\in V(G)\mid \dist_G(u,v)\geq 3\},$$
where $\dist_G(u,v)$ is the distance between $u$ and $v$ in $G$.
Let $N^*[v] = N^*(v) \cup \{v\}$. Observe that $u$ can represent $v$ if and only if $v$ can represent $u$.

To model the representative's strategy, we define the binary variable $x_{uv}$ for every $u \in V(G)$ and $v \in N^*[u]$, indicating whether the vertex $u$ is the representative of the color assigned to vertex $v$. With such variables, we can define the following formulation.
\begin{mini!}
{x}{\sum_{u \in V} x_{uu} \label{rep:obj} }{}{(REP)}
\addConstraint{\sum_{u \in N^*[v]} x_{uv}}{= 1}{ v \in V \label{rep:rep}}
\addConstraint{x_{uv} + x_{uw}}{\leq x_{uu}}{ \forall vw\in E, ~\forall u \in N^*(v)\cap N^*(w) \label{rep:edgeG2}}
\addConstraint{\sum_{z \in N^{*}(u) \cap N(v)} x_{uz}}{\leq x_{uu}}{\forall v,u \in V \label{rep:neg}}
\addConstraint{\sum\limits_{\ell \in N^*(w) \cap N(u, v)} \!\!\!\!\!\!\!\!\!\! x_{w\ell} }{ \leq 1 + x_{ww}+x_{zz} -x_{zu} -x_{zv}\quad}{\begin{array}{l}
    \forall w\in V, \forall uv \notin E, u\neq v, \\
     \forall z \in N^*(u)\cap N^*(v),~ z\neq w 
\end{array}\label{rep:harm}}
\addConstraint{x_{uv}}{\in \{0,1\}}{\forall u \in V,  \forall v \in  N^{*}[u] \label{rep:limits}}
\end{mini!}

The expressions~\eqref{rep:obj}-\eqref{rep:harm} are the counterparts of \eqref{std:obj}-\eqref{std:harm2} with variables $x_{uu}$ and $x_{uv}$ playing the role of $w_i$ and $w_{v,i}$, respectively. Indeed, the objective function~\eqref{rep:obj} minimizes the number of representative vertices since each representative counts as a different color. Constraints~\eqref{rep:rep} indicate that each vertex $v \in V(G)$ must be represented either by itself or by some other vertex in $N^*(v)$. 
Constraints~\eqref{rep:edgeG2} ensure that each pair of adjacent vertices that could be represented by the same vertex must be assigned distinct colors. Since there is no isolated vertex, they also enforce the variable $x_{uu}$ to be one, i.e. that $u$ must be the representative of its color, whenever a vertex $v$ has $u$ as representative of its color.
Constraints~\eqref{rep:neg} ensure that at most one vertex $z$ in the neighborhood of a vertex $v$ can be represented by a vertex $u$, which must be at distance at least three from $z$.
Given two non-adjacent vertices $u$ and $v$, Constraints~\eqref{rep:harm} indicate that if they have the same color, i.e., they have the same representative $z$, there must be at most one vertex in their conjoint neighborhood that has the same color represented by $w$; also, notice that if $u$ and $v$ do not share the same color, then in their conjoint neighborhood, any color appears at most twice, thus ensuring the validity of the constraints in this case. Finally, Constraints~\eqref{rep:limits} are the definition of the variable's bounds.

\subsection{Asymmetric Representatives Formulation}

The representatives formulation has some symmetric solutions given by the possibility of choosing any vertex in a color class to be the representative vertex of that class. To tackle this problem, as in \cite{campelo08asymetric}, we consider an arbitrary total ordering $\leq$ over the vertex set, and take the representative of a color $i$ as the minimum, with respect to $\leq$, vertex with color $i$. This way any coloring has a unique description in terms of representative vertices.

Thus, for every $v\in V(G)$, let us partition $N^*(v)$ into $N_{+}^*(v) = \{ u \in N^*(v):u>v\}$ and $N_{-}^*(v) = \{ u \in N^*(v):u< v\}$. These sets comprise the vertices (other than $v$) that can be represented by $v$ and that can represent $v$, respectively. Besides, let $N_{-}^*[v] =N_{-}^*(v)\cup \{v\}$. Also, consider $S$ as the set comprising every vertex whose unique possible representative is itself.
More formally, $S =\{ v \in V(G) \mid N_{-}^*(v) = \emptyset\}$.

In addition to avoiding symmetrical solutions, the asymmetric representation also demands fewer variables than those used in the representatives formulation. Indeed, we only define the binary variable $x_{uv}$, for every $u \in V(G)$ and $v \in N^{*}_{+}(u)$, to indicate whether or not vertex $u$ is the representative of vertex $v$, as well as $x_{uu}$, for every $u \in V(G) \setminus S$, meaning whether or not $u$ is a representative vertex. Notice that there is no need to create variables $x_{uu}$ for the vertices in $S$, as they will certainly be representatives. Instead, the corresponding variables can be fixed at 1 a priori.
To keep the formulation presentation as simple as possible, we introduce the notation $y_{u}$ for each $u \in V(G)$ standing for $x_{uu}$, if $u \not\in S$, or $1$, otherwise. Then, we can define the following formulation.

\begin{mini!}
{x}{\sum_{u \in V} x_{uu} \label{arep:obj} }{}{(AREP)}
\addConstraint{\sum\limits_{u \in N^{*}_{-}[v]} x_{uv}}{= 1}{ \forall v \in V \setminus S \label{arep:rep}}
\addConstraint{x_{uv}}{\leq y_{u}}{\forall u \in V,~  \forall v \in N^{*}_{+}(u) \label{arep:rep2}}
\addConstraint{x_{uv} + x_{uw}}{\leq y_{u}}{\forall vw \in E, \forall u \in N^{*}_{-}(v)\cap N^{*}_{-}(w) \label{arep:edgeG2}}
\addConstraint{\sum_{z \in N^{*}_{+}(u) \cap N(v)} x_{uz}}{\leq y_{u}}{\forall u,v \in V \label{arep:neg}}
\addConstraint{\sum\limits_{\ell \in N^*_+(w) \cap N(u, v)} \!\!\!\!\!\!\!\!\!\! x_{w\ell} }{ \leq 1 + y_{w}+y_{z} -x_{zu} -x_{zv}\quad}{\begin{array}{l}
    \forall w\in V, \forall uv \notin E, u\neq v, \\
     \forall z \in N^*_-(u)\cap N^*_-(v),~ z\neq w 
\end{array}\label{arep:harm}}
\addConstraint{x_{uv}}{\in \{0,1\}}~{\forall v \in V,~ \forall u \in N^{*}_{-}(v)} \label{arep:limits1}
\addConstraint{x_{uu}}{\in \{0,1\}}~{\forall u \in V \setminus S} \label{arep:limits2}
\end{mini!}

The above formulation is essentially \eqref{rep:obj}-\eqref{rep:limits} with $N^*(v)$ appropriately replaced by $N^*_+(v)$ or $N^*_-(v)$ depending on whether one would mean the vertices represented by $v$ or those that $v$ can represent. Note that \eqref{arep:rep2} is only necessary if $v$ is isolated in $G[N^{*}_{+}(u)]$.

\section{Greedy Heuristics}
\label{sec::algorithm}
As computing the harmonious chromatic number is an NP-Hard problem, one has little to no hope of solving large instances of the problem exactly. Hence, we propose heuristics to handle such large instances. A simple type of heuristic method that commonly represents the first attempt to tackle any problem is the greedy heuristic. 

We present a simple greedy heuristic based on the iterative process of identification of vertices discussed in Section~\ref{sec::theory}. We propose two different greedy choices for the pair to be contracted: the minimum degree sum (Algorithm~\ref{alg:heuristicmin}) and the maximum degree sum (Algorithm~\ref{alg:heuristicmax}). Both methods use the result presented in Theorem~\ref{idlema} to compute the correct value of a harmonic coloring that colors the identified vertices with the same color.

\begin{algorithm}[H]
\caption {Greedy MIN ($G$)}
\label{alg:heuristicmin}
    $H \leftarrow G$\;
    \While{$H$ has $v$ and $u$ with $\dist_G(u,v)\geq 3$}{
        Select $u$ and $v$ such that $N(v) \cap N(u) = \emptyset$ and $d_G(u) + d_G(v)$ is minimum among all possible pairs\;
        Identify vertices $u$ and $v$ in $H$\;
    }
    \Return $|V(H)|$\;
\end{algorithm}

\begin{algorithm}[H]
\caption {Greedy MAX ($G$)}
\label{alg:heuristicmax}
    $H \leftarrow G$\;
    \While{$H$ has $v$ and $u$ with $\dist_G(u,v)\geq 3$}{
        Select $u$ and $v$ such that $N(v) \cap N(u) = \emptyset$ and $d_G(u) + d_G(v)$ is maximum among all possible pairs\;
        Identify vertices $u$ and $v$ in $H$\;
    }
    \Return $|V(H)|$\;
\end{algorithm}

Both heuristics aim to minimize the maximum degree of the graph obtained by the sequence of identifications. While Algorithm~\ref{alg:heuristicmax} tries to attain this goal by minimizing 
the total number of identifications (as a consequence of prioritizing pairs of high-degree vertices), Algorithm~\ref{alg:heuristicmin} allows a higher number of identifications but increases the maximum degree slowly (by using small degree vertices).

\section{Computational Results}\label{sec:5}
\label{sec::results}

\subsection{Environment and Instances Description}

All computational experiments were executed on a machine running Ubuntu 20.04 x86-64 GNU/Linux, with an Intel Core i7-7500 Octa-Core 2.70 GHz processor and 20 GB of RAM. The ILP formulations were implemented in Julia and solved with Gurobi 10.0.1. The greedy heuristics were also coded in Julia. 

The benchmark instances used in the computational experiments were divided into two sets: {\bf random} and {\bf dimacs}. Both sets are also divided into {\em small} and {\em large} instances.

The set {\bf random} is composed of randomly generated graphs, that respect a given probability of having an edge between any distinct pair of vertices. More precisely, for a given natural $n$ and a probability $p$, we generate a graph with $n$ vertices and add an edge between every pair of distinct vertices independently with probability $p$. Note that $p$ is an estimation of the edge density of the graph. Besides, such a process is inherently random and can produce different structures. Thus, for each pair composed of a vertex number $n\in \{$10,20,30,40,50,60,70,80,90,100$\}$ and an edge probability $p\in \{$0.05,0.10,0.20,0.30,0.40$\}$,  we generate 5 distinct graphs.
Thus, the set \textbf{random} has 250 instances. The subset of {\em small} instances comprises those where $n \leq 50$ and $0.20 \leq p$. 
The set \textbf{dimacs}\cite{instances} comprises 31 graphs that are challenging for the classical vertex coloring problem. This set encompasses diverse graph structures that are known to yield challenging instances for coloring problems. We also select a part of these instances to form a  set of {\em small} instances, but the criterion adopted is only the number of vertices, due to the diversity of structures of the dimacs graphs. More precisely, we select all instances that have 100 vertices or fewer.

Recall that the square graph $G^2$ of a graph $G$ is obtained from $G$ by adding an edge between non-adjacent vertices at distance of 2. This means that a harmonious coloring of $G$ is a conventional proper coloring of $G^2$ with additional restrictions. 
Table~\ref{tab:random:summary2} presents the true edge density $d$ of $G$ and $G^2$ related to the instances in the set \emph{random}. Each percentage is an average of the densities of the corresponding 5 instances. We remark that every graph built with parameter $p \geq 0.50$ has a complete graph as its square graph; hence, its harmonic chromatic number is equal to the number of vertices, and therefore one does not need to use an integer linear program to obtain such a number.

\begin{table}[!h]
\begin{center}	
\begin{scriptsize}
\begin{tabular}{|c|c|c|c|c|c|c|c|c|c|c|}
\hline & \multicolumn{2}{|c|}{ $p=0.05$ } & \multicolumn{2}{|c|}{ $p=0.10$ } & \multicolumn{2}{|c|}{ $p=0.20$ } & \multicolumn{2}{|c|}{ $p=0.30$ } & \multicolumn{2}{|c|}{ $p=0.40$ }\\
\hline $n(V)$ & $d(G)$ & $d(G^2)$ & $d(G)$ & $d(G^2)$ & $d(G)$ & $d(G^2)$ & $d(G)$ & $d(G^2)$ & $d(G)$ & $d(G^2)$ \\ \hline 
10 & 4.7\% & 9.5\% & 10.9\% & 25.8\% & 15.6\% & 36.0\% & 26.5\% & 61.8\% & 31.6\% & 79.6\%\\ 
20 & 5.0\% & 11.1\% & 9.8\% & 25.9\% & 18.1\% & 59.6\% & 27.2\% & 82.2\% & 37.5\% & 97.2\%\\ 
30 & 6.6\% & 17.9\% & 10.2\% & 32.7\% & 18.0\% & 67.1\% & 27.8\% & 92.1\% & 37.7\% & 99.1\%\\ 
40 & 5.7\% & 16.0\% & 10.3\% & 39.0\% & 18.1\% & 76.4\% & 28.7\% & 97.5\% & 38.4\% & 99.9\%\\ 
50 & 6.1\% & 21.2\% & 10.9\% & 49.0\% & 18.2\% & 83.1\% & 28.8\% & 99.0\% & 38.1\% & 100.0\%\\ 
60 & 5.7\% & 21.5\% & 10.8\% & 54.2\% & 19.6\% & 92.0\% & 28.6\% & 99.5\% & 39.3\% & 100.0\%\\ 
70 & 5.6\% & 22.4\% & 10.6\% & 56.8\% & 18.9\% & 93.2\% & 29.7\% & 99.9\% & 38.6\% & 100.0\%\\ 
80 & 5.9\% & 26.1\% & 10.6\% & 60.7\% & 19.2\% & 95.7\% & 29.1\% & 99.9\% & 38.3\% & 100.0\%\\ 
90 & 6.0\% & 30.0\% & 10.9\% & 67.8\% & 19.3\% & 97.4\% & 29.6\% & 100.0\% & 38.8\% & 100.0\%\\ 
100 & 5.9\% & 31.2\% & 11.2\% & 73.0\% & 19.9\% & 98.2\% & 29.0\% & 100.0\% & 39.7\% & 100.0\%\\ 
\hline 
\end{tabular}
\caption{Mean density values for the instances in the test set {\bf random}, grouped by number of vertices and value of parameter $p$.}
\label{tab:random:summary2}
\end{scriptsize}
\end{center}		
\end{table} 

Table~\ref{tab:dimacs:summary} presents the same summary for the instances in the set {\bf dimacs}. We remark that the square of 11 of these graphs is a complete graph. 
So they were not used as test instances. Furthermore, the majority of the other square graphs have high density, above $80\%$.

\begin{table}[!h]
\begin{center}	
\begin{scriptsize}
\begin{tabular}{|l|r|r|r|r|}
\hline name & $|V(G)|$ & $|E(G)|$ & $d(G)$ & $d(G^2)$ \\ \hline 
myciel4.col & 23 & 71 & 25.7\% & 74.3\%\\ 
myciel5.col & 47 & 236 & 20.9\% & 79.1\%\\ 
david.col & 87 & 406 & 10.6\% & 94.5\%\\ 
queen13\_13.col & 169 & 3328 & 23.2\% & {\bf 100.0\%}\\ 
anna.col & 138 & 493 & 5.1\% & 54.7\%\\ 
games120.col & 120 & 638 & 8.8\% & 40.2\%\\ 
queen14\_14.col & 196 & 4186 & 21.7\% & {\bf 100.0\%}\\ 
miles250.col & 128 & 387 & 4.7\% & 12.1\%\\ 
queen5\_5.col & 25 & 160 & 49.2\% & {\bf 100.0\%}\\ 
queen8\_8.col & 64 & 728 & 35.0\% & {\bf 100.0\%}\\ 
queen12\_12.col & 144 & 2596 & 24.9\% & {\bf 100.0\%}\\ 
mulsol.i.1.col & 197 & 3925 & 20.1\% & 49.1\%\\ 
miles500.col & 128 & 1170 & 14.2\% & 34.5\%\\ 
huck.col & 74 & 301 & 10.8\% & 65.0\%\\ 
mulsol.i.4.col & 185 & 3946 & 22.9\% & 89.4\%\\ 
mulsol.i.2.col & 188 & 3885 & 21.9\% & 84.6\%\\ 
mulsol.i.3.col & 184 & 3916 & 23.0\% & 89.4\%\\ 
mulsol.i.5.col & 186 & 3973 & 22.8\% & 89.5\%\\ 
miles1500.col & 128 & 5198 & 63.0\% & 95.4\%\\ 
myciel7.col & 191 & 2360 & 12.9\% & 87.1\%\\ 
queen7\_7.col & 49 & 476 & 38.9\% & {\bf 100.0\%}\\ 
miles750.col & 128 & 2113 & 25.6\% & 60.4\%\\ 
miles1000.col & 128 & 3216 & 39.0\% & 75.5\%\\ 
jean.col & 80 & 254 & 7.8\% & 40.2\%\\ 
queen9\_9.col & 81 & 1056 & 31.8\% & {\bf 100.0\%}\\ 
queen8\_12.col & 96 & 1368 & 29.4\% & {\bf 100.0\%}\\ 
queen10\_10.col & 100 & 1470 & 29.1\% & {\bf 100.0\%}\\ 
queen11\_11.col & 121 & 1980 & 26.8\% & {\bf 100.0\%}\\ 
queen6\_6.col & 36 & 290 & 43.5\% & {\bf 100.0\%}\\ 
myciel6.col & 95 & 755 & 16.6\% & 83.4\%\\ 
myciel3.col & 11 & 20 & 30.3\% & 69.7\%\\ 
\hline 
\end{tabular}
\caption{Density values for the instances in the test set {\bf dimacs}.}
\label{tab:dimacs:summary}
\end{scriptsize}
\end{center}		
\end{table}

\subsection{Comparison of the Mathematical Models}

The first group of experiments was designed to assess the effectiveness of the proposed mathematical models. Therefore, we used the small instances in such experiments. We limited the execution time of the solver to 1800 seconds in this first set of experiments, used a single thread, and left the standard configuration of all the solver's parameters. 

All the presented formulations have a group of constraints to define stable sets (color classes): \eqref{std:edge} in the standard formulation, \eqref{rep:edgeG2} in the representatives formulation and \eqref{arep:edgeG2} in the asymmetric formulation. Although correct, this kind of constraint tends to enlarge the model and is commonly replaced by a set of constraints related to a clique cover of the edges of the graph. More precisely, assume that $K_1, K_2, \ldots ,K_p$ is a set of 
maximal cliques in the input graph square ($G^{2}$) such that the endpoints of every edge belong to at least one clique. 
Then, for each clique $K$, the $|K|(|K|-1)/2$ constraints in each formulation forbidding both endpoints of every edge in $K$ from receiving the same color can be replaced by a single constraint ensuring that at most one vertex in $K$ is assigned any given color.
This approach reduces the number of constraints in each model and leads to better computational results. Thus, we employ such a strategy while keeping the same choice of cliques for the three formulations to preserve the comparison between them.

The maximal clique cover we use in the experiments is built as follows. The first step selects a vertex $u$ with maximum degree in the graph square and creates a set $K$ containing only $u$. Then, we iteratively select a vertex of maximum degree in $G^2[N(K)\setminus K]$ and add such a vertex to $K$. Eventually, $K$ becomes a maximal clique in $G^2$, and it is used in the clique cover. At this point, all edges from $G^2[K]$ are discarded, and the iterative process is repeated.  This procedure is formalized in Algorithm~\ref{alg:maxcliquecover}, which is applied with $H=G^2$.

\begin{algorithm}[H]
\caption {Maximal Clique Cover ($H$)}
\label{alg:maxcliquecover}
    $\mathcal{K} \leftarrow \emptyset$\;
    \While{$E(H) \not= \emptyset$}{
        Select $u$ with maximal degree in $H$\;
        $K \leftarrow \{u\}$\;
        $cand \leftarrow N(u)$\;
        \While{$cand \not= \emptyset$}{
            Select $v$ with maximal degree in $H[cand]$\;
            $K \leftarrow K \cup \{v\}$\;
            $cand \leftarrow cand \cap N(v)$\;
        }
        $\mathcal{K} \leftarrow \mathcal{K} + K$\;
        $E(H) \leftarrow E(H) - E(H[K])$
    }
    \Return $\mathcal{K}$\;
\end{algorithm}

Finally, before presenting the computational results, we clarify two specific points related to the mathematical models: the number of colors used in $STD$ and the vertex ordering used in $REP$ and $AREP$. For the first issue, we use the minimum between the upper bound presented in \cite{HK1983} and the number of vertices of the graph. For the second one, we adopt a non-decreasing degree order of the vertices, similarly to~\cite{campelo08asymetric}. We note that both choices could be further improved, but this would require a more in-depth study of the models, which is beyond the scope of this paper.

The results are reported in Tables~\ref{tab:random:opt} and~\ref{tab:dim:opt} for the small instances in the set {\bf random} and in the set {\bf dimacs}, respectively. A group of columns is associated with each formulation: the standard formulation ({\tt STD}), the representatives formulation ({\tt REP}) and the asymmetric representatives formulation ({\tt AREP}). Each group has three columns: {\tt BSOL} - the best solution known at the end of the time limit (if no solution is found, the entry receives the mark {\bf -}); {\tt GAP} - the relative gap as reported by the solver; and {\tt time} - the elapsed time as reported by the solver. In Table~\ref{tab:random:opt}, each entry is the average of instances with the same number of vertices and edge assignment probability.
As we present the results grouped, we should explain what happens when one or more instances of the group reach the time limit or do not provide a feasible solution or bound. 
Instances for which no feasible solution or bound is found are disregarded when computing the reported means.As for the instances that reached the time limit but yielded feasible solutions, we report their information as presented by the solver and use the time limit proposed for the experiments (1800 seconds) as the elapsed time. If all 5 instances reached the time limit, the mark {\tt limit} is shown in the corresponding entry. 

Regarding the set \textbf{random}, notice that both representatives formulations outperform the standard formulation, being able to solve to optimality almost all instances. Between the two representatives formulations, the asymmetric version ({\tt AREP}) was able to solve to optimality all instances solved by the symmetric version ({\tt REP}), as well as additional ones, while consistently requiring less elapsed time and yielding smaller gaps.
The differences between the two representatives formulations are more pronounced when $p=0.05$ and $0.10$. For 30 vertices, ({\tt AREP}) could optimally solve these sparser instances while ({\tt REP}) was not able to close the optimality gap. For vertex number equal to 40 and 50, {\tt AREP} significantly reduced the gap in these instances instances. Besides, {\tt AREP} consistently took less time.

\begin{table}[!h]
\begin{center}
\begin{scriptsize}
\begin{tabular}{cc|rrr|rrr|rrr}
 & & \multicolumn{3}{|c|}{ STD } & \multicolumn{3}{|c|}{ REP } & \multicolumn{3}{|c}{ AREP } \\ 
\hline $|V(G)|$ & $p$ & BSOL & GAP & TIME & BSOL & GAP & TIME & BSOL & GAP & TIME \\ 
\hline10 & 0.05 & 3.0 & 0.0\% & 0.006 & 3.0 & 0.0\% & 0.019 & 3.0 & 0.0\% & 0.001 \\ 
 10 & 0.10 & 4.8 & 0.0\% & 0.012 & 4.8 & 0.0\% & 0.024 & 4.8 & 0.0\% & 0.001 \\ 
 10 & 0.20 & 5.2 & 0.0\% & 0.046 & 5.2 & 0.0\% & 0.016 & 5.2 & 0.0\% & 0.002 \\ 
 10 & 0.30 & 7.2 & 0.0\% & 0.123 & 7.2 & 0.0\% & 0.003 & 7.2 & 0.0\% & 0.001 \\ 
 10 & 0.40 & 7.8 & 0.0\% & 0.084 & 7.8 & 0.0\% & 0.001 & 7.8 & 0.0\% & 0.001 \\ 
 20 & 0.05 & 5.6 & 0.0\% & 0.448 & 5.6 & 0.0\% & 33.060 & 5.6 & 0.0\% & 0.026 \\ 
 20 & 0.10 & 7.4 & 0.0\% & 3.423 & 7.4 & 0.0\% & 49.308 & 7.4 & 0.0\% & 0.035 \\ 
 20 & 0.20 & 10.8 & 1.818\% & 783.795 & 10.8 & 0.0\% & 1.649 & 10.8 & 0.0\% & 0.032 \\ 
 20 & 0.30 & 14.4 & 0.0\% & 99.365 & 14.4 & 0.0\% & 0.051 & 14.4 & 0.0\% & 0.016 \\ 
 20 & 0.40 & 19.0 & 0.0\% & 10.94 & 19.0 & 0.0\% & 0.004 & 19.0 & 0.0\% & 0.003 \\ 
 30 & 0.05 & 9.0 & 6.667\% & 1680.506 & 9.0 & 29.000\% & {\tt limit} & 9.0 & 0.0\% & 16.436 \\ 
 30 & 0.10 & 11.0 & 12.455\% & {\tt limit} & 11.0 & 19.909\% & {\tt limit} & 11.0 & 0.0\% & 1.782 \\ 
 30 & 0.20 & 16.4 & 11.168\% & {\tt limit} & 16.4 & 0.0\% & 121.619 & 16.4 & 0.0\% & 0.430 \\ 
 30 & 0.30 & 23.4 & 0.909\% & 1648.509 & 23.4 & 0.0\% & 0.150 & 23.4 & 0.0\% & 0.037 \\ 
 30 & 0.40 & 29.2 & 0.0\% & 665.829 & 29.2 & 0.0\% & 0.008 & 29.2 & 0.0\% & 0.006 \\ 
 40 & 0.05 & 10.6 & 24.727\% & {\tt limit} & 11.0 & 38.121\% & 408.927 & 10.6 & 5.636\% & {\tt limit} \\ 
 40 & 0.10 & 13.0 & 23.077\% & {\tt limit} & 15.4 & 34.811\% & 1161.243 & 14.6 & 1.333\% & 1188.028 \\ 
 40 & 0.20 &  -  &  -  & {\tt limit} & 23.0 & 6.000\% & 167.749 & 22.8 & 0.0\% & 1.828 \\ 
 40 & 0.30 &  -  &  -  & {\tt limit} & 34.6 & 0.0\% & 0.184 & 34.6 & 0.0\% & 0.048 \\ 
 40 & 0.40 &  -  &  -  & {\tt limit} & 39.6 & 0.0\% & 0.014 & 39.6 & 0.0\% & 0.012 \\ 
 50 & 0.05 &  -  &  -  & {\tt limit} & 14.7 & 38.793\% & {\tt limit} & 14.0 & 24.139\% & {\tt limit} \\ 
 50 & 0.10 &  -  &  -  & {\tt limit} & 21.8 & 36.797\% & 666.415 & 19.5 & 9.722\% & 391.382 \\ 
 50 & 0.20 &  -  &  -  & {\tt limit} & 29.6 & 0.0\% & 121.581 & 29.6 & 0.0\% & 3.412 \\ 
 50 & 0.30 &  -  &  -  & {\tt limit} & 45.2 & 0.0\% & 0.230 & 45.2 & 0.0\% & 0.065 \\ 
 50 & 0.40 &  -  &  -  & {\tt limit} & 49.6 & 0.0\% & 0.037 & 49.6 & 0.0\% & 0.035 \\ 
 \hline
 \end{tabular}
 \caption{Results obtained by the three proposed formulations on small instances of the benchmark set {\bf random}.
 }
 \label{tab:random:opt}
\end{scriptsize} 
\end{center}
\end{table}

The conclusions for the small instances in the set \textbf{dimacs} are similar. In Table~\ref{tab:dim:opt}, we only present the results for the instances that had a solution found by at least one of the formulations. 
Again, both representatives formulations outperform the standard formulation, which reaches the time limit for the majority of the instances in the set. As before, the asymmetric version is consistently faster than the symmetric version. It was even able to solve one instance in which {\tt REP} failed to find a feasible solution, namely {\tt jean.col}.

\begin{table}[!h]
\begin{scriptsize}
\begin{center}
\begin{tabular}{l|rrr|rrr|rrr}
&  \multicolumn{3}{|c|}{ STD } & \multicolumn{3}{|c|}{ REP } & \multicolumn{3}{|c}{ AREP } \\ 
\hline Instance & BSOL & GAP & TIME & BSOL & GAP & TIME & BSOL & GAP & TIME \\ 
\hline
david.col &
- & - & {\tt limit} &
83.0 & 0\% & 9.407 &
83.0 & 0\% & 1.714 \\  
huck.col &
- & - & {\tt limit} &
54.0 & 0\% & 31.773 &
54.0 & 0\% & 9.426 \\
jean.col &
- & - & {\tt limit} &
- & - & {\tt limit} &
37.0 & 0\% & 21.455 \\
myciel3.col &
11.0 & 0\% & 0.495 &
11.0 & 0\% & $<$0.001 &
11.0 & 0\% & $<$0.001 \\
myciel4.col &
23.0 & 0\% & 215.979 &
23.0 & 0\% & 0.007 &
23.0 & 0\% & 0.007 \\
myciel5.col &
- & - & {\tt limit} &
47.0 & 0\% & 0.016 &
47.0 & 0\% & 0.013 \\
queen5\_5.col &
25.0 & 0\% & 22.118 &
25.0 & 0\% & 0.005 &
25.0 & 0\% & 0.005 \\
queen6\_6.col &
36.0 & 0\% & 1356.976 &
36.0 & 0\% & 0.006 &
36.0 & 0\% & 0.006 \\
queen7\_7.col &
- & - & {\tt limit} &
49.0 & 0\% & 0.026 &
49.0 & 0\% & 0.026 \\
queen8\_8.col &
- & - & {\tt limit} &
64.0 & 0\% & 0.078 &
64.0 & 0\% & 0.078 \\
\hline
 \end{tabular}
 \caption{Results obtained by the three proposed formulations on the small instances of benchmark set {\bf dimacs}. 
 }
 \label{tab:dim:opt} 
\end{center}
\end{scriptsize}
\end{table}

\subsection{Comparison of the Greedy Heuristics}

The last round of computational experiments was designed to assess the overall quality of the greedy heuristics proposed in Section~\ref{sec::programming}. We execute both greedy strategies on both sets of instances ({\bf random} and {\bf dimacs}). The results of these tests are presented in Tables~\ref{tab:random:heur} and~\ref{tab:dim:heur}. In these tables, Column {\tt MIP} presents the results obtained by the asymmetric formulation which serves as a control for the quality of the solution obtained by the heuristics. Columns {\tt GREEDY\_MIN} and {\tt GREEDY\_MAX} present the results obtained by Algorithm~\ref{alg:heuristicmin} and Algorithm~\ref{alg:heuristicmax}, respectively. Each one of these larger columns is subdivided into two columns: {\tt SOL} - the number of colors used by the method, and {\tt TIME} - the elapsed time for each method, as measured by the solver for the MILP formulation and by the operational system for the greedy heuristics. 

\begin{table}[!h]
\begin{center}
\begin{scriptsize}
\begin{tabular}{rr|rr|rr|rr}
 & & \multicolumn{2}{|c|}{ MIP } & \multicolumn{2}{|c|}{ GREEDY\_MIN } & \multicolumn{2}{|c}{ GREEDY\_MAX } \\ 
\hline $|V(G)|$ & $p$ & BSOL & TIME & BSOL & TIME & BSOL & TIME \\ 
\hline
 10 & 0.05 & 3.0 & 0.001 & 3.6 & 0.358 & {\bf 3.0} & 0.359 \\ 
 10 & 0.10 & 4.8 & 0.001 & 5.8 & 0.359 & {\bf 5.0} & 0.357 \\ 
 10 & 0.20 & 5.2 & 0.002 & 6.2 & 0.365 & {\bf 5.8} & 0.365 \\ 
 10 & 0.30 & 7.2 & 0.001 & 8.0 & 0.380 & {\bf 7.6} & 0.388 \\ 
 10 & 0.40 & 7.8 & 0.001 & {\bf 8.2} & 0.384 & {\bf 8.2} & 0.381 \\ 
 20 & 0.05 & 5.6 & 0.026 & 8.2 & 0.394 & {\bf 6.0} & 0.391 \\ 
 20 & 0.10 & 7.4 & 0.035 & 10.8 & 0.385 & {\bf 9.2} & 0.385 \\ 
 20 & 0.20 & 10.8 & 0.032 & {\bf 12.6} & 0.361 & 12.4 & 0.362 \\ 
 20 & 0.30 & 14.4 & 0.016 & 16.0 & 0.368 & {\bf 15.8} & 0.369 \\ 
 20 & 0.40 & 19.0 & 0.003 & {\bf 19.0} & 0.357 & {\bf 19.0} & 0.358 \\ 
 30 & 0.05 & 9.0 & 16.436 & 15.0 & 0.376 & {\bf 11.2} & 0.370 \\ 
 30 & 0.10 & 11.0 & 1.782 & 17.6 & 0.390 & {\bf 14.4} & 0.386 \\ 
 30 & 0.20 & 16.4 & 0.430 & {\bf 19.6} & 0.373 & 20.2 & 0.369 \\ 
 30 & 0.30 & 23.4 & 0.037 & {\bf 24.8} & 0.364 & 25.4 & 0.361 \\ 
 30 & 0.40 & 29.2 & 0.006 & {\bf 29.4} & 0.325 & {\bf 29.4} & 0.326 \\ 
 40 & 0.05 & 10.6 & {\tt limit} & 22.2 & 0.388 & {\bf 14.0} & 0.376 \\ 
 40 & 0.10 & 14.6 & 1188.028 & 21.6 & 0.382 & {\bf 19.4} & 0.364 \\ 
 40 & 0.20 & 22.8 & 1.828 & {\bf 25.2} & 0.382 & 27.8 & 0.363 \\ 
 40 & 0.30 & 34.6 & 0.048 & {\bf 35.8} & 0.368 & 36.0 & 0.363 \\ 
 40 & 0.40 & 39.6 & 0.012 & {\bf 39.6} & 0.286 & {\bf 39.6} & 0.285 \\ 
 50 & 0.05 & 14.0 & {\tt limit} & 26.2 & 0.411 & {\bf 19.0} & 0.366 \\ 
 50 & 0.10 & 19.5 & 391.382 & 27.6 & 0.416 & {\bf 26.4} & 0.370 \\ 
 50 & 0.20 & 29.6 & 3.412 & {\bf 34.8} & 0.403 & 35.4 & 0.368 \\ 
 50 & 0.30 & 45.2 & 0.065 & {\bf 45.8} & 0.372 & 46.2 & 0.367 \\ 
 50 & 0.40 & 49.6 & 0.035 & {\bf 49.6} & 0.311 & {\bf 49.6} & 0.312 \\ 
 60 & 0.05 & 17.0 & 700.113 & 32.6 & 0.449 & {\bf 22.2} & 0.373 \\ 
 60 & 0.10 & 25.0 & 136.417 & {\bf 31.2} & 0.480 & 32.8 & 0.372 \\ 
 60 & 0.20 & 40.6 & 3.076 & {\bf 45.2} & 0.425 & {\bf 45.2} & 0.372 \\ 
 60 & 0.30 & 56.4 & 0.103 & {\bf 56.4} & 0.395 & 56.6 & 0.387 \\ 
 60 & 0.40 & 60.0 & 0.084 & {\bf 60.0} & 0.288 & {\bf 60.0} & 0.289 \\ 
 70 & 0.05 &  -  & {\tt limit} & 37.6 & 0.518 & {\bf 26.8} & 0.379 \\ 
 70 & 0.10 &  -  & {\tt limit} & {\bf 38.0} & 0.552 & 39.2 & 0.386 \\ 
 70 & 0.20 &  -  & {\tt limit} & {\bf 52.6} & 0.473 & 54.2 & 0.377 \\ 
 70 & 0.30 &  -  & {\tt limit} & {\bf 68.6} & 0.364 & {\bf 68.6} & 0.363 \\ 
 70 & 0.40 &  -  & {\tt limit} & {\bf 70.0} & 0.300 & {\bf 70.0} & 0.300 \\ 
 80 & 0.05 &  -  & {\tt limit} & 42.2 & 0.615 & {\bf 32.6} & 0.385 \\ 
 80 & 0.10 &  -  & {\tt limit} & {\bf 41.0} & 0.690 & 45.0 & 0.388 \\ 
 80 & 0.20 &  -  & {\tt limit} & {\bf 62.8} & 0.511 & 64.0 & 0.383 \\ 
 80 & 0.30 &  -  & {\tt limit} & {\bf 78.2} & 0.392 & {\bf 78.2} & 0.386 \\ 
 80 & 0.40 &  -  & {\tt limit} & {\bf 80.0} & 0.291 & {\bf 80.0} & 0.289 \\ 
 90 & 0.05 &  -  & {\tt limit} & 46.2 & 0.796 & {\bf 36.8} & 0.411 \\ 
 90 & 0.10 &  -  & {\tt limit} & {\bf 47.2} & 0.856 & 52.8 & 0.405 \\ 
 90 & 0.20 &  -  & {\tt limit} & {\bf 74.6} & 0.532 & 75.6 & 0.400 \\ 
 90 & 0.30 &  -  & {\tt limit} & {\bf 89.8} & 0.312 & {\bf 89.8} & 0.311 \\ 
 90 & 0.40 &  -  & {\tt limit} & {\bf 90.0} & 0.296 & {\bf 90.0} & 0.298 \\ 
 100 & 0.05 &  -  & {\tt limit} & 51.4 & 1.004 & {\bf 41.8} & 0.437 \\ 
 100 & 0.10 &  -  & {\tt limit} & {\bf 54.6} & 1.053 & 61.8 & 0.411 \\ 
 100 & 0.20 &  -  & {\tt limit} & {\bf 85.8} & 0.581 & 86.4 & 0.415 \\ 
 100 & 0.30 &  -  & {\tt limit} & {\bf 99.4} & 0.339 & {\bf 99.4} & 0.336 \\ 
 100 & 0.40 &  -  & {\tt limit} & {\bf 100.0} & 0.306 & {\bf 100.0} & 0.307 \\ 
 \hline
 \end{tabular}
 \caption{Results obtained by the two proposed greedy heuristics on the benchmark set {\bf random}. 
 }
 \label{tab:random:heur}
 \end{scriptsize}
\end{center}
\end{table}

Figure~\ref{fig:greedyres} summarizes the results from Table~\ref{tab:random:heur} (instances in the set {\bf random}). As done previously, the results were grouped by the probability of assigning an edge between a pair of vertices. In the vertical axis, we show the ratio between the best solution values for {\tt GREEDY\_MIN} and {\tt GREEDY\_MAX}. Regarding the instances with density between $30\%$ and $10\%$, we can argue that Algorithm~\ref{alg:heuristicmin} (represented by {\tt GREEDY\_MIN}) outperforms Algorithm~\ref{alg:heuristicmax} (represented by {\tt GREEDY\_MAX}) consistently, but curiously the opposite occurs for the instances with density $5\%$. This behavior seemed surprising at first but might be explainable by the fact that, with such low probability, the difference between the smallest and largest degree is very narrow. Indeed several vertices have the same degree. Then the choice of vertices is almost the same for both heuristics, which explains the variable behavior on the instances with density $5\%$.

\begin{figure}[!h]
    \centering
    \includegraphics[width=0.8\linewidth]{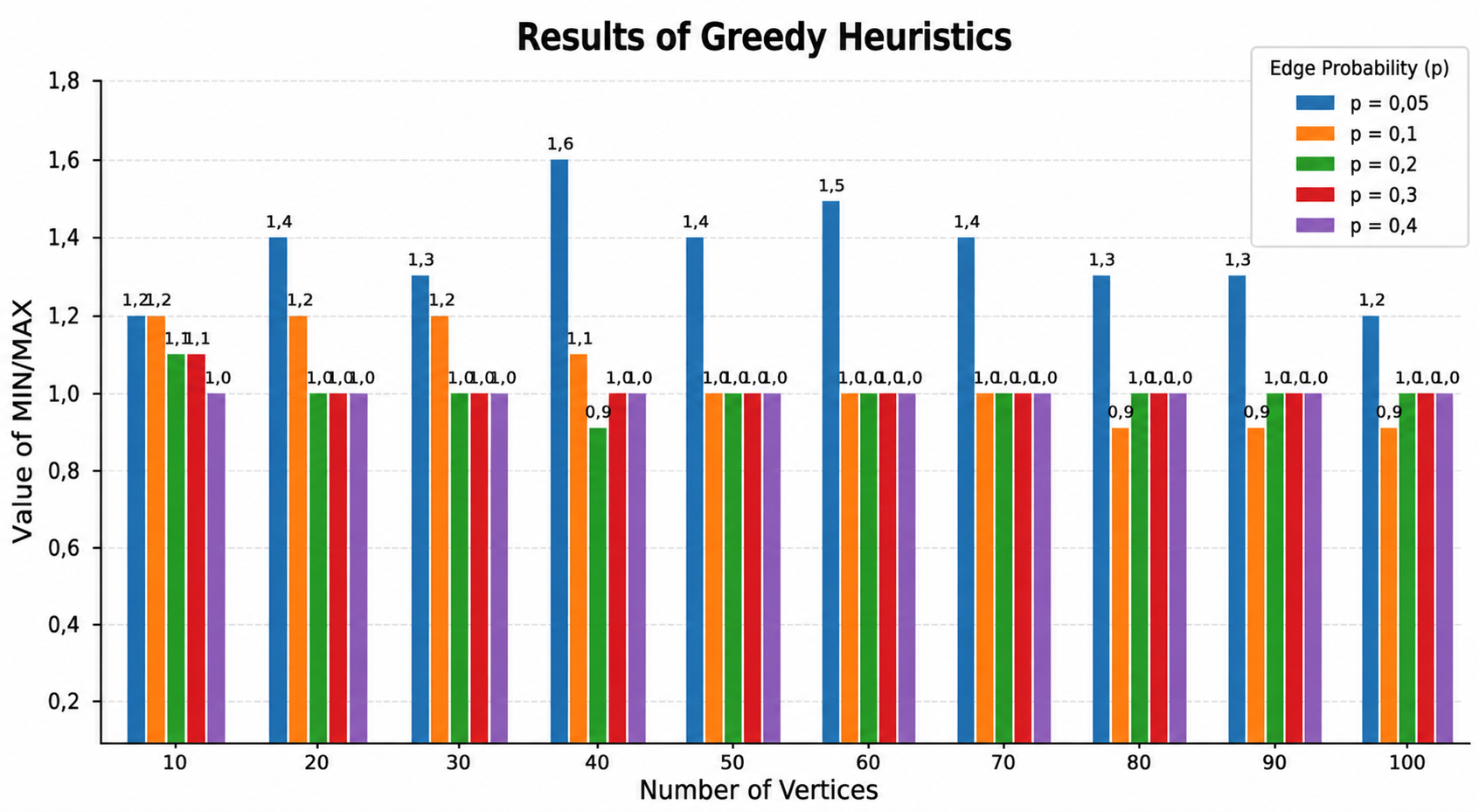}
    \caption{Sumary of the computational results for the set of instances {\tt random}. The vertical axis represents the ratio of the solution obtained by the $MIN$ greedy heuristic by the solution value obtained by the $MAX$ greedy heuristic.}
    \label{fig:greedyres}
\end{figure}

Table~\ref{tab:dim:heur} presents the results for the instances in the set {\bf dimacs}. First of all, notice that the heuristics are able to achieve the optimal solutions found by the formulations, except for {\em jean} and {\em huck}. Secondly, observe that {\tt GREEDY\_MAX} outperforms {\tt GREEDY\_MIN} in all instances except for {\em huck}. This difference is more pronounced in the class of instances {\em miles.i.X}.

\begin{table}[!h]
\begin{scriptsize}
\begin{center}
\begin{tabular}{l|rr|rr|rr}
 & \multicolumn{2}{|c|}{ MIP } & \multicolumn{2}{|c|}{ GREEDY\_MIN } & \multicolumn{2}{|c}{ GREEDY\_MAX } \\ 
\hline Instance & BSOL & TIME & BSOL & TIME & BSOL & TIME \\ 
\hline
anna.col &
- & {\tt limit} &
76.0  & 1.624 &
{\bf 72.0} & 0.462\\ 
david.col &
83.0 & 1.714 &
{\bf 83.0}  & 0.374 &
{\bf 83.0}  & 0.367\\
games120.col &
- & {\tt limit} &
72.0  & 1.360 &
{\bf 62.0} & 0.428 \\
huck.col &
 54.0 & 9.426 &
57.0  & 0.425 &
{\bf 54.0} & 0.367 \\
jean.col &
37.0 & 21.455 &
{\bf 39.0}  & 0.716 &
40.0 & 0.416 \\
miles1000.col &
- & {\tt limit} &
119.0  & 0.789 &
{\bf 118.0} & 0.465 \\
miles1500.col &
- & {\tt limit} &
{\bf 126.0}  & 0.534 &
{\bf 126.0} & 0.471 \\
miles250.col &
- & {\tt limit} &
93.0  & 0.849 &
{\bf 45.0} & 0.479 \\
miles500.col &
- & {\tt limit} &
99.0  & 0.995 &
{\bf 83.0} & 0.482 \\
miles750.col &
- & {\tt limit} &
109.0  & 0.935 &
{\bf 104.0} & 0.497 \\
mulsol.i.1.col &
- & {\tt limit} &
{\bf 137.0}  & 2.440 &
{\bf 137.0} & 0.736 \\
mulsol.i.2.col &
- & {\tt limit} &
{\bf 172.0}  & 0.666 &
{\bf 172.0} & 0.622 \\
mulsol.i.3.col &
- & {\tt limit} &
{\bf 173.0}  & 0.575 &
{\bf 173.0} & 0.604 \\
mulsol.i.4.col &
- & {\tt limit} &
{\bf 174.0}  & 0.573 &
{\bf 174.0} & 0.608 \\
mulsol.i.5.col &
- & {\tt limit} &
{\bf 175.0}  & 0.584 &
{\bf 175.0} & 0.617 \\
myciel3.col &
 11.0 & 0.003 &
{\bf 11.0}  & 0.271 &
{\bf 11.0} & 0.271 \\
myciel4.col &
23.0 & 0.001 &
{\bf 23.0}  & 0.267 &
{\bf 23.0} & 0.264 \\
myciel5.col &
47.0 & 0.013 &
{\bf 47.0}  & 0.268 &
{\bf 47.0} & 0.268 \\
myciel6.col &
- & {\tt limit} &
{\bf 95.0}  & 0.289 &
{\bf 95.0} & 0.288 \\
myciel7.col &
- & {\tt limit} &
{\bf 191.0}  & 0.367 &
{\bf 191.0} & 0.368 \\
queen10\_10.col &
- & {\tt limit} &
{\bf 100.0}  & 0.292 &
{\bf 100.0} & 0.293 \\
queen11\_11.col &
- & {\tt limit} &
{\bf 121.0}  & 0.346 &
{\bf 121.0} & 0.345 \\
queen12\_12.col &
- & {\tt limit} &
{\bf 144.0}  & 0.336 &
{\bf 144.0} & 0.338 \\
queen13\_13.col &
- & {\tt limit} &
{\bf 169.0}  & 0.365 &
{\bf 169.0} & 0.365 \\
queen14\_14.col &
- & {\tt limit} &
{\bf 196.0}  & 0.407 &
{\bf 196.0} & 0.403 \\
queen5\_5.col &
25.0 & 0.005 &
{\bf 25.0}  & 0.264 &
{\bf 25.0} & 0.262 \\
queen6\_6.col &
36.0 & {\tt 0.006} &
{\bf 36.0}  & 0.262 &
{\bf 36.0} & 0.261 \\
queen7\_7.col &
49.0 & {\tt 0.026} &
{\bf 49.0}  & 0.274 &
{\bf 49.0} & 0.271 \\
queen8\_12.col &
- & {\tt limit} &
{\bf 96.0}  & 0.290 &
{\bf 96.0} & 0.290 \\
queen8\_8.col &
64.0 & 0.078 &
{\bf 64.0}  & 0.280 &
{\bf 64.0} & 0.276 \\
queen9\_9.col &
- & {\tt limit} &
{\bf 81.0}  & 0.281 &
{\bf 81.0} & 0.282 \\
\hline
 \end{tabular}
 \caption{Results obtained by the two proposed greedy heuristics on the benchmark set {\bf dimacs}. 
 }
 \label{tab:dim:heur}
\end{center}
\end{scriptsize}
\end{table}

\section{Concluding remarks}

The harmonious coloring problem tends to be harder in sparse graphs, so it is not surprising that the solution methods tend to reach the optimum faster in dense graphs, as observed in the computational experiments.
The higher efficiency of the representatives models is mainly due to two facts: the number of variables in these models decreases with the edge density of $G^2$, and they are able to break some/all symmetries (especially the asymmetric version).
For $d$-degenerate graphs, we obtained a strong upper bound on the harmonious chromatic number, but we believe it could be further improved if expressed in terms of parameters other than the vertex cover and the maximum degree. The parameterized complexity of this problem has not been well studied yet. So, this would be a good research avenue to explore.

\section*{Acknowledgments}

A preliminary version of this work was presented at the VII Encontro de Teoria da Computação 2022, a satellite event of the yearly Congress of the Brazilian Computer Science Society. In the corresponding short abstract, written in Portuguese, one will find Lemma~\ref{lemad2}, the standard model, the representatives formulation and the asymmetric representatives formulation.
We also presented Theorem~\ref{idlema}, but it was incorrect. The statement is fixed here. A major part of this research was done during the second author's masters, which was financed by CNPQ 131142/2023-2.

\bibliographystyle{acm}
\bibliography{harmonious}

\end{document}